\newcommand{\reportOnly}[1]{
    \iftoggle{report}{#1}{}
}
\newcommand{\submissionOnly}[1]{
    \iftoggle{report}{}{#1}
}
\renewcommand\th@plain{\normalfont}
\setlist{nosep,leftmargin=*}
\def\BibTeX{{\rm B\kern-.05em{\sc i\kern-.025em b}\kern-.08em
T\kern-.1667em\lower.7ex\hbox{E}\kern-.125emX}}
\newcounter{thmcounter}[section]
\renewcommand{\thethmcounter}{\arabic{section}.\arabic{thmcounter}}
\newtheorem{theorem}[thmcounter]{Theorem}
\newtheorem{lemma}[thmcounter]{Lemma}
\newtheorem{corollary}[thmcounter]{Corollary}
\newtheorem{definition}[thmcounter]{Definition}
\newtheorem{assumption}[thmcounter]{Assumption}
\newtheorem{remark}[thmcounter]{Remark}
\newtheorem{example}[thmcounter]{Example}
\newcommand{\R}{\mathbb{R}}
\newcommand{\N}{\mathbb{N}}
\newcommand{\Q}{\mathbb{Q}}
\renewcommand{\cal}[1]{\mathcal{#1}}
\newcommand{\setto}{\rightrightarrows}
\renewcommand{\emptyset}{\varnothing}
\DeclareMathOperator*\dom{\operatorname{dom}}
\DeclareMathOperator*{\rot}{rot}
\newcommand{\inner}[2]{\left\langle #1, #2 \right\rangle}
\begin{document}

\title{On the Instability of Nesterov's ODE under Non-Conservative Vector Fields}
\author{Daniel E. Ochoa, Mahmoud Abdelgalil, Jorge I. Poveda
    \thanks{\scriptsize D. E. Ochoa is with the Department of Electrical
        and Computer Engineering, University of California Santa Cruz, 95064, USA. M. Abdelgalil and J. I. Poveda are with the Department of Electrical and Computer Engineering, University of California San Diego, La Jolla, CA 92093. This work was supported in part by NSF grants ECCS CAREER 2305756, CMMI 2228791, and AFOSR grant FA9550-22-1-0211.}
}
\maketitle

\thispagestyle{empty} 
\begin{abstract}
    We study the \emph{instability} of Nesterov's ODE in non-conservative settings, where the driving term is not necessarily the gradient of a potential function. While convergence properties under Nesterov's ODE are well-characterized for settings with gradient-based driving terms, we show that the presence of \emph{arbitrarily small} non-conservative terms can lead to instability. To resolve the instability issue, we study a regularization mechanism based on restarting. For this mechanism, we establish novel explicit bounds on the resetting period that ensure the decrease of a suitable Lyapunov function, thereby guaranteeing stability and ``accelerated'' convergence rates under suitable smoothness and monotonicity properties on the driving term. Numerical simulations support our results.
\end{abstract}
\begin{keywords}
    Averaging Analysis, Non-Conservative Systems, Hybrid Dynamical Systems
\end{keywords}
\section{Introduction}
The Nesterov Accelerated Gradient method has been a cornerstone of optimization since its introduction in \cite{nesterovMethodSolvingConvex1983}. Its continuous-time analog, introduced in \cite{suDifferentialEquationModeling2016} and termed the \emph{Nesterov's Ordinary Differential Equation} (ODE), has emerged as a powerful tool to study optimization algorithms using a continuous-time dynamical systems point of view \cite{shiUnderstandingAccelerationPhenomenon2022}.
Nesterov's ODE is defined by the equation
\begin{equation}\label{eq:initial_Nesterov_ODE0}
    \ddot{x}+ \frac{3}{\tau}\dot{x} + \cal G(x) = 0,\quad \added{\dot\tau = \eta},
\end{equation}
where $x \in \mathbb{R}^n$, \added{$\tau\in[T_0,\infty)$}, with $T_0>0$, $\cal G: \mathbb{R}^n \to \mathbb{R}^n$, \added{and $\eta\in (0,1]$}. When $\cal G(x) = \nabla J(x)$ for some \added{convex} potential function $J$, Nesterov's ODE achieves accelerated convergence to the minimum of $J$ with rate $O(1/t^2)$ \cite{suDifferentialEquationModeling2016}.

The remarkable success of Nesterov’s method in optimization motivates the exploration of its potential extensions to \emph{non-conservative settings}, where the vector field~$\mathcal{G}$ is not necessarily the gradient of a potential function. Such scenarios naturally arise in areas such as game theory and consensus-based distributed optimization~\cite{gharesifardDistributedContinuousTimeConvex2014}. In game-theoretic contexts, it is well known that for a class of games known as \emph{potential games}, convergence to equilibria can be achieved via gradient-like dynamics. Similarly, in multi-agent systems with undirected communication graphs, consensus dynamics are often interpreted as gradient flows. These observations raise an important question: can dynamical systems of the form~\eqref{eq:initial_Nesterov_ODE0} also provide benefits in settings where the game is not potential or the communication graph is directed?

Extending Nesterov’s acceleration to such domains, however, presents several challenges. First, the lack of a potential function prevents one from formulating the ODE within an optimization framework and leveraging the body of existing stability results developed in that context. Second, while in optimization problems Nesterov’s ODE achieves accelerated convergence through dynamic damping of the form~$\frac{3}{t}\dot{x}$, this same term can have adverse effects in non-conservative systems. For instance, our previous studies on accelerated Nash equilibrium seeking~\cite{ochoaMomentumBasedNashSetSeeking2024} and distributed concurrent learning~\cite{ochoaDecentralizedConcurrentLearning2024} revealed that this damping mechanism fails to mitigate the destabilizing influence of the non-conservative component of~$\mathcal{G}$ in~\eqref{eq:initial_Nesterov_ODE0}, often leading to instability. Although similar behaviors have been observed numerically even in optimization problems~\cite{poveda2019inducing}, a theoretical explanation for this \emph{instability phenomenon} has remained elusive.

The first contribution of this paper is to provide a theoretical explanation for the emergence of instability in Nesterov’s ODE when the term~$\mathcal{G}$ is non-conservative. Specifically, by leveraging the Helmholtz decomposition theorem~\cite{glotzlHelmholtzDecompositionPotential2023}, we decompose~$\mathcal{G}$ into its conservative and non-conservative components, and employ the variation-of-constants formula~\cite[Prop.~9.6]{bulloGeometricControlMechanical2005} to obtain a representation suitable for averaging analysis. We then study the average system using Floquet theory~\cite[Sec.~19]{sarychevLieChronologicoalgebraic2001} to identify subclasses of vector fields~$\mathcal{G}$ for which Nesterov’s ODE \emph{fails} to stabilize the set~$\mathcal{A}\coloneqq\{x:\mathcal{G}(x)=0\}$. \added{We further show that, in specific scenarios, this instability persists even when the non-conservative component of~$\mathcal{G}$ is arbitrarily small compared to the conservative component, thus providing new insights into adversarial perturbations that can destabilize such accelerated flows in traditional optimization settings. Our instability analysis focuses on linear non-conservative vector fields whose conservative components are strongly monotone.}

The second main contribution is the development and analysis of a hybrid dynamical system that stabilizes~$\mathcal{A}$ by combining two mechanisms: continuous-time flows governed by Nesterov-like dynamics, and discrete-time resets triggered whenever $\tau\in\mathbb{R}_{\ge0}$ reaches the upper bound of a compact interval. \added{Unlike the instability analysis, which focuses on linear vector fields, this hybrid framework encompasses a broader class of nonlinear vector fields satisfying suitable monotonicity and Lipschitz conditions.} Extending the results in~\cite{ochoaMomentumBasedNashSetSeeking2024}, we derive quasi-optimal reset conditions and establish improved convergence rate guarantees.

The paper is organized as follows. Section~\ref{sec:preliminaries} presents the preliminaries. Section \ref{sec:standardAveraging} prepares Nesterov's ODE for averaging, and Section \ref{sec:instability} presents its instability properties. Section \ref{sec:stabilityRestarting} presents a hybrid mechanism to recover stability. Section \ref{sec:numeric} presents a numerical example. Section \ref{sec:conclusions} ends with conclusions.
\section{Preliminaries}
\label{sec:preliminaries}
We use $(u,v)=[u^{\top},v^{\top}]^{\top}$to denote the concatenation of $u,v\in\R^n$. \added{The Euclidean inner product is denoted by $\inner{u}{v}$ for all $u,v\in \R^n$}. We let $|z|\coloneqq \sqrt{\inner{z}{z}}$ for each $z \in \mathbb{R}^n$, and use $|z|_C \coloneqq \min_{s \in C} |z - s|$ to refer to the \added{minimum} distance between $z$ and a closed set $C$. \added{Given a matrix $A \in \mathbb{R}^{m \times n}$, the induced 2-norm is defined by $\|A\| \coloneqq \sup_{|x|=1} |Ax|$.} The set of eigenvalues of a matrix $A\in\R^{n\times n}$, counted with multiplicities, is denoted by $\operatorname{spec}(A)$. We use $A_{ij}\in \mathbb{R}$ for the $i$-th entry of the $j$-th column of $A\in\R^{n\times n}$. Given $\{a_k\}_{k=1}^n\subset \R^n$,  $\operatorname{diag}\{a_k\}_{k=1}^n\in \R^{n\times n}$ denotes the matrix whose $i$-th diagonal entry equals $a_i$ for all $i\in\{1,2,\ldots, n\}$, and whose off-diagonal entries are zero. A set-valued map $F:\R^n\setto\R^n$ assigns to each point $x\in \R^n$ a set $F(x)\subset \R^n$. 
Given $f:\mathcal{X}\to \mathcal{Y}$ and $g:\mathcal{Y}\to \mathcal{Z}$, we use $g\circ f$ to denote their composition. The \emph{flow $\Phi_s^f$ along a vector field} $f:\mathbb{R}^n \to \mathbb{R}^n$ assigns to each point $(s, x_0)$ the value $\Phi_{s}^f(x_0) = \chi(s) \in \mathbb{R}^n$, where $\chi$ is the unique solution to $\dot{x} = f(x)$ satisfying $\chi(0) = x_0$. Given a diffeomorphism $\Psi: \mathbb{R}^n \to \mathbb{R}^n$ and $f: \mathbb{R}^n \to \mathbb{R}^n$, the \emph{pullback} of $f$ by $\Psi$, denoted $\Psi^*f: \mathbb{R}^n \to \mathbb{R}^n$, is defined by $(\Psi^*f)(x) = \left(\frac{\partial \Psi^{-1}}{\partial x}\circ f \circ \Psi\right)(x)$ for all $x\in \mathbb{R}^n$, where $\frac{\partial \Psi^{-1}}{\partial x}(y)$ denotes the Jacobian matrix of $\Psi^{-1}$ evaluated at $y \in \mathbb{R}^n$. \added{Given} a smooth function $J:\mathbb{R}^n\to \mathbb{R}$ we define its gradient by \added{$\nabla J(x) = \left(\frac{\partial J}{\partial x_1}(x), \ldots, \frac{\partial J}{\partial x_n}(x)\right)$}. \added{Given a function $f:\R^n\to \R^n$, we define its divergence} $\operatorname{div} f(x)\coloneqq \sum_{i=1}^n\frac{\partial f_i}{\partial x_i}(x)$, and let the rotation operator $\overline{\rot }\, f:\R^n\to\R^{n\times n}$ be given by $(\overline{\rot }\, f(x))_{ij} \coloneqq \frac{\partial f_i(x)}{\partial x_j}-\frac{\partial f_j(x)}{\partial x_i}$ for each $x\in\R^n$. Given a matrix-valued function $K:\mathbb{R}^n\to \mathbb{R}^{n\times n}$, we define the rotation operator as $\rot K(x) = \sum_{j=1}^n \left(\frac{\partial K_{1j}}{\partial x_1}(x),\ldots, \frac{\partial K_{nj}}{\partial x_n}(x) \right)$, where $K_{ij}:\mathbb{R}^n\to \mathbb{R}$ corresponds to the function $x\mapsto (K(x))_{ij}$.
%
%

To obtain our results, we use the following theorem relating the flows of \added{two vector fields $g,f:\R^n\to\R^n$}:
%
\begin{theorem}[{Variation-of-Constants Formula~\cite[Prop. 9.6]{bulloGeometricControlMechanical2005}}]\label{variationOfConstants}
    Let $g$ be $r$-th and $f$ be $(r+1)$-th continuously differentiable time-dependent vector fields. Let $x_0 \in \mathbb{R}^n$, $T > 0$, $\tau \in [0,T]$, and define $\tilde{g}(\tau,x) = ((\Phi^f_{\tau})^*g_\tau)(x)$, where $g_\tau(x)\coloneqq g(x,\tau)$, and $(\Phi^f_{\tau})^* g_\tau$ denotes the pullback of $g_\tau$ along the flow of $f$. Then, for all $\tau \in [0, T]$ where the flows $\Phi^{f+g}_\tau$, $\Phi_\tau^f$, and $\Phi^{\tilde{g}}_\tau$ exist, it follows that
    $
        \Phi^{f+g}_{\tau}(x_0) = \left(\Phi^f_{\tau} \circ \Phi^{\tilde{g}}_{\tau}\right)(x_0).
    $\hfill$\square$
    %
\end{theorem}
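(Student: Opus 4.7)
The plan is to show both sides, viewed as functions of $\tau$ with initial condition $x_0$, satisfy the same (non-autonomous) ODE, and then conclude by the uniqueness of solutions (guaranteed by the stated $C^{r+1}$ and $C^r$ regularity of $f$ and $g$). Concretely, I will define
$\psi(\tau) \coloneqq \Phi^{f+g}_\tau(x_0)$ and $\phi(\tau) \coloneqq \bigl(\Phi^f_\tau \circ \Phi^{\tilde g}_\tau\bigr)(x_0)$,
and verify (i) $\psi(0) = \phi(0) = x_0$ and (ii) both $\psi$ and $\phi$ solve $\dot z = f(z,\tau) + g(z,\tau)$.

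For (i), note that $\Phi^f_0$ and $\Phi^{\tilde g}_0$ are the identity by definition, so $\phi(0)=x_0$; the same holds trivially for $\psi$. For $\psi$, the defining ODE gives $\dot\psi(\tau) = f(\psi(\tau),\tau) + g(\psi(\tau),\tau)$ directly. The substantive step is differentiating $\phi$. Setting $y(\tau) \coloneqq \Phi^{\tilde g}_\tau(x_0)$, the chain/product rule yields
\begin{equation*}
\dot\phi(\tau) \;=\; \left.\tfrac{\partial \Phi^f_s}{\partial s}\right|_{s=\tau}\!(y(\tau)) \;+\; \tfrac{\partial \Phi^f_\tau}{\partial x}(y(\tau))\,\dot y(\tau).
\end{equation*}
The first summand equals $f(\Phi^f_\tau(y(\tau)),\tau) = f(\phi(\tau),\tau)$ by the definition of the flow of $f$. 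For the second, substitute $\dot y(\tau) = \tilde g(\tau,y(\tau))$ and expand $\tilde g$ using the pullback definition, so that
$\dot y(\tau) = \tfrac{\partial (\Phi^f_\tau)^{-1}}{\partial x}\!\bigl(\Phi^f_\tau(y(\tau))\bigr)\, g\bigl(\Phi^f_\tau(y(\tau)),\tau\bigr)$.
Then the factor $\tfrac{\partial \Phi^f_\tau}{\partial x}(y(\tau))\cdot \tfrac{\partial (\Phi^f_\tau)^{-1}}{\partial x}(\Phi^f_\tau(y(\tau)))$ equals the identity matrix, as follows from differentiating the identity $(\Phi^f_\tau)^{-1}\circ \Phi^f_\tau = \mathrm{id}$ and applying the inverse-function theorem (valid because $\Phi^f_\tau$ is a $C^{r+1}$ diffeomorphism on its domain of existence). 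Hence the second summand collapses to $g(\phi(\tau),\tau)$, giving $\dot\phi(\tau) = f(\phi(\tau),\tau)+g(\phi(\tau),\tau)$, as claimed.

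With (i) and (ii) in hand, standard uniqueness of solutions to ODEs with locally Lipschitz right-hand sides (assured by the $C^r$/$C^{r+1}$ regularity) implies $\psi(\tau) = \phi(\tau)$ on the common domain where all three flows exist.

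I expect the main obstacle to be the bookkeeping in differentiating $\phi(\tau)$: the map $\tau \mapsto \Phi^f_\tau$ varies both through its time argument and through the $\tau$-dependent point $y(\tau)$ being pushed forward, and the pullback $\tilde g$ itself is $\tau$-dependent through both $\Phi^f_\tau$ and $g_\tau$. The cancellation that makes everything collapse to $f+g$ hinges on recognizing that the Jacobian of $\Phi^f_\tau$ and the Jacobian of $(\Phi^f_\tau)^{-1}$ evaluated at the corresponding image point are mutually inverse, which is precisely the identity encoded by the pullback operation.
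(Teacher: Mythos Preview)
Your argument is correct and is the standard proof of the variation-of-constants formula: verify that both $\psi(\tau)=\Phi^{f+g}_\tau(x_0)$ and $\phi(\tau)=\Phi^f_\tau\circ\Phi^{\tilde g}_\tau(x_0)$ solve the same initial-value problem and invoke uniqueness. The key cancellation you identify---that $\tfrac{\partial \Phi^f_\tau}{\partial x}(y)\cdot\tfrac{\partial (\Phi^f_\tau)^{-1}}{\partial x}(\Phi^f_\tau(y))=I$---is exactly what the pullback is engineered to produce, and your bookkeeping of the two $\tau$-dependencies in $\phi$ is accurate.

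Note, however, that the paper does \emph{not} supply its own proof of this statement: Theorem~\ref{variationOfConstants} is quoted from \cite[Prop.~9.6]{bulloGeometricControlMechanical2005} and closed with a $\square$ immediately after the formula, with no accompanying argument. So there is nothing in the paper to compare your proof against; your derivation simply fills in what the paper outsources to the reference, and it does so by the same route one finds in that reference.
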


\emph{Hybrid Dynamical Systems:} To formulate mechanisms that recover the stability of Nesterov's ODE, we use hybrid dynamical systems (HDS) with state $x\in\mathbb{R}^n$ and dynamics
\begin{equation}\label{prelim:HDS0}
    \cal H:\begin{cases}
         & x \in C\qquad ~~\dot{x} \in F(x) \\
         & x \in D\qquad x^+ \in G(x)
    \end{cases},
\end{equation}
where $F:\mathbb{R}^n\rightrightarrows\mathbb{R}^n$ is the flow map, $G:\mathbb{R}^n\rightrightarrows\mathbb{R}^n$ is the jump map, $C\subset\mathbb{R}^n$ is the flow set, and $D\subset\mathbb{R}^n$ is the jump set. We use the tuple $\cal H=(C,F,D,G)$ to refer to the data of the HDS. Solutions to HDS are parameterized by continuous-time $t\in \mathbb{R}_{\ge 0}$ and discrete-time $j\in \mathbb{Z}_{\ge0}$ and thus evolve on hybrid time domains. For a formal definition of solutions to HDS we refer the reader to \cite[Sec. 2]{goebelHybridDynamical2012}. \added{Standard continuous-time systems described by the dynamics $\dot x=f(x)$ with $f:\mathbb{R}^n\to \mathbb{R}^n$, can be cast as HDS by setting $C=\mathbb{R}^n$, $D=\emptyset$, and $F(x)=\{f(x)\}$}.
\begin{definition}[Stability/Instability Notions]
    \added{A compact set $\cal A \subset \R^n$ is said to be uniformly globally exponentially stable (UGES) for the system $\cal H$ in \eqref{prelim:HDS0} if there exist $c_1,c_2>0$ such that
        $|x(t,j)|_\cal A \leq c_1|x(0,0)|_\cal Ae^{-c_2(t + j)}$ for all $(t,j)\in \dom x$
        and each solution $x$ to $\cal H$.} \added{The set $\cal A$ is unstable for $\cal H$ if there exists $\varepsilon \!>\! 0$ such that for each $\delta \!>\! 0$ there exist a solution $x$, with $|x(0,0)|_\cal A \!<\! \delta$, and $(T,J) \in\operatorname{dom} x$ satisfying $|x(T,J)|_\cal A \!>\! \varepsilon.$}  \hfill$\square$%
\end{definition}
\section{Nesterov's ODE:\\Analysis via Standard Averaging}
\label{sec:standardAveraging}
In this section, we prepare Nesterov's ODE for analysis via averaging. First, we transform \eqref{eq:initial_Nesterov_ODE0} to the $\tau$-timescale by noting that every \added{solution $(x,\tau)$ to \eqref{eq:initial_Nesterov_ODE0} satisfies} $\tau(t)=\added{\eta t + T_0}$, with \added{$T_0>0$}, for all $t\in \dom (x,\tau)$. This \added{timescale change generates the following} system with state  $(x,\tau)\in \R^n\times [T_0,\infty)$ and dynamics:
\begin{equation}\label{eq:initial_Nesterov_ODE}
    \frac{d^2x}{d\tau^2} + \frac{3}{\tau}\frac{dx}{d\tau} + \gamma\cal G(x) = 0,\quad \added{\frac{d\tau}{d\tau}=1},
\end{equation}
where $\gamma\coloneqq \eta^2$. To analyze the effect of the vector field $\cal G$ on the dynamics \eqref{eq:initial_Nesterov_ODE}, we use the following generalization of the Helmholtz decomposition theorem \cite[Thm. 7.2]{glotzlHelmholtzDecompositionPotential2023}:
\begin{lemma}[Helmholtz Decomposition]\label{lemma:helmholtzDecomposition}
    Suppose $\cal G$ is analytic \added{in $\R^n$}. Then, there exist functions \added{$P\!:\!\mathbb{R}^n\to \mathbb{R}^{n\times n}$, $J\!:\!\mathbb{R}^n\to \mathbb{R}$, and $K\!:\!\mathbb{R}^n\to \mathbb{R}^{n\times n}$ such that $\cal G(x) = \nabla J(x) + \rot  K(x)$ for all $x\in\mathbb{R}^n$, where $J(x) \!\coloneqq\! \sum_{i=1}^n P_{ii}(x)$ and $K(x)\! \coloneqq P(x) \!-\! P(x)^\top$ satisfy $\operatorname{div}(\rot K(x))\!=\! 0$ and $\overline{\rot }(\nabla J) \!=\! 0$ for all $x\in \R^n$.} \hfill$\square$
\end{lemma}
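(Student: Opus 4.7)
The plan is to decompose the claim into two subclaims: (i) the existence of a matrix-valued potential $P$ satisfying $\mathcal{G}(x) = \nabla J(x) + \rot K(x)$ with $J \coloneqq \operatorname{tr}(P)$ and $K \coloneqq P - P^\top$, and (ii) the two differential identities $\overline{\rot}(\nabla J) = 0$ and $\operatorname{div}(\rot K) = 0$. Part (i) is the substantive content, while part (ii) reduces to bookkeeping with mixed partial derivatives.

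For (i), I would directly invoke \cite[Thm.~7.2]{glotzlHelmholtzDecompositionPotential2023}, which provides an explicit integral (equivalently, a power-series) construction of $P$ on all of $\mathbb{R}^n$. The hypothesis that $\mathcal{G}$ is analytic on $\mathbb{R}^n$ is used precisely to ensure this construction converges globally; under only $C^k$ regularity one would generically obtain only a local decomposition. Once $P$ is in hand, the definitions $J \coloneqq \sum_{i=1}^n P_{ii}$ and $K \coloneqq P - P^\top$ are immediate, with $K$ automatically skew-symmetric by construction, and the identity $\mathcal{G} = \nabla J + \rot K$ is exactly what the cited theorem delivers.

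For (ii), both identities follow from Schwarz's theorem on the equality of mixed partial derivatives, applied to the smooth objects produced in (i). The $(i,j)$-entry of $\overline{\rot}(\nabla J)$ is $\partial_{j}\partial_{i} J - \partial_{i}\partial_{j} J$, which vanishes since $J$ is smooth (regularity inherited from the analyticity of $\mathcal{G}$ through the construction of $P$). Likewise, expanding $\operatorname{div}(\rot K)$ with the definition of $\rot$ on matrix fields yields a double sum of second partial derivatives of entries $K_{ij}$; combining the skew-symmetry $K_{ij} = -K_{ji}$ with a swap of the dummy summation indices and Schwarz's theorem shows that this sum equals its own negative, and hence is zero.

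The main obstacle is really step (i): producing $P$ with globally convergent formulas is nontrivial and is the whole point of the cited theorem, so I would invoke their result as a black box rather than re-derive it. The payoff is the global, pointwise decomposition on all of $\mathbb{R}^n$ required by the averaging analysis in the subsequent sections, as opposed to a merely local one; part (ii) then falls out by routine computation.
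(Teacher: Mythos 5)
Your proposal matches the paper's approach exactly: the paper presents Lemma~3.2 as a restatement of the cited result~\cite[Thm.~7.2]{glotzlHelmholtzDecompositionPotential2023} and gives no proof at all, and you likewise invoke that theorem as a black box for the existence of the matrix potential $P$. Your added part~(ii) — that $\overline{\rot}(\nabla J)=0$ and $\operatorname{div}(\rot K)=0$ follow from Schwarz's theorem and the skew-symmetry of $K = P - P^\top$ via a dummy-index swap — is a correct routine verification that the paper leaves implicit, so your write-up is, if anything, slightly more complete than the paper's.
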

%
\added{To ensure uniqueness of solutions to \eqref{eq:initial_Nesterov_ODE} and guarantee that the flow along the corresponding vector field is well-defined, we impose the following conditions on the elements of the decomposition in Lemma~\ref{lemma:helmholtzDecomposition}.}
\begin{assumption}\label{assump:regularity_Helmholtz}
    \begin{enumerate}[(i)]
        \item \emph{Strong Monotonicity:} $\exists \kappa_{J}>0$ such that
              $
                  \left\langle \nabla J(x_1) - \nabla J(x_2), x_1 - x_2\right\rangle \ge \kappa_{J}|x_1-x_2|^2
              $
              for all $x_1,x_2\in\mathbb{R}^n$. Additionally, we have that
              $
                  \left\langle \rot  K(x_1) - \rot  K(x_2), x_1-x_2 \right\rangle\ge 0
              $
              for all $x_1,x_2\,{\in}\,\mathbb{R}^n$.
        \item \emph{Lipschitz continuity:} $\exists \ell_{J},\ell_{R}>0$ such that
              $
                  |\nabla J(x_1) - \nabla J(x_2)| \le \ell_{J}|x_1 - x_2|$ and $|\rot K(x_1) - \rot K(x_2)| \le \ell_{K}|x_1 - x_2|~\forall x_1,x_2\in\mathbb{R}^n.
              $
        \item \emph{Scaling relationship:} $\ell_{K} = \alpha\sqrt{\ell_{J}}$ with $\alpha\in (0,1]$.\hfill $\square$
    \end{enumerate}
\end{assumption}
Assumption \ref{assump:regularity_Helmholtz}-(i) ensures that the scalar function $J$ is strongly convex, which, via \cite[Thm. 3]{suDifferentialEquationModeling2016}, \added{guarantees accelerated convergence of Nesterov's ODE to the unique minimizer of $J$ when the non-conservative term $\rot K$ is not present.} The next example presents a class of vector fields that satisfies these conditions.
\begin{example}\label{example:linearDecomposition}
    Given $Q\in\mathbb{R}^{n\times n}$, the vector field $\cal G(x) = Qx$ is analytic and satisfies Assumption \ref{assump:regularity_Helmholtz}. The Helmholtz decomposition in Lemma \ref{lemma:helmholtzDecomposition} can be explicitly constructed by letting
    $J(x) = x^\top Q_s x$, $(K(x))_{ij} = \added{\frac{1}{2}\left(Q_a\right)_{ij}|x|^2} $ for all  $x\in \mathbb{R}^n \text{ and } i,j\in \{1,2,\ldots, n\}$,
    where $Q_s \coloneqq \frac{1}{2}(Q+Q^\top)$ and $Q_a \coloneqq \frac{1}{2}(Q-Q^\top)$.\hfill$\square$
\end{example}
%
Using Lemma \ref{lemma:helmholtzDecomposition} and Assumption \ref{assump:regularity_Helmholtz}, by letting $\added{s} \coloneqq \added{\tau}/\varepsilon$, with $\varepsilon \coloneqq (\ell_{J})^{-\frac{1}{2}}$, and letting $y \coloneqq (y_1, y_2) = (x, \frac{dx}{ds})\in \mathbb{R}^{2n}$, from \eqref{eq:initial_Nesterov_ODE} we obtain that \added{$d\tau/ds =\varepsilon$} and
\begin{align}\label{eq:pre_standard_Averaging_Form_Nesterov}
    \frac{dy}{d\added{s}} & =
    \begin{pmatrix}
        y_2 \\
        - \nabla \hat{J}(y_1)
    \end{pmatrix} + \varepsilon \begin{pmatrix}
                                    0 \\
                                    -\frac{3}{\varepsilon\added{s} + \added{T_0}}y_2 - \rot \hat{K}(y_1)
                                \end{pmatrix}\notag \\
                          & \eqqcolon h(y) + \varepsilon u(y,\varepsilon s),
\end{align}
for all $y\!\in\!\R^n$, where $\hat{J}(x)\!\coloneqq\!\frac{\added{\gamma}}{\ell_{J}}J(x)$ and $\hat{K}(x)\!\coloneqq\!\frac{\alpha\added{\gamma}}{\ell_{K}}K(x).$\hfill\break
%
By using the pullback of $u_{\varepsilon\added{s}}(y) \coloneqq u(y,\varepsilon\added{s})$ via the flow along $h$, we obtain a system with state $(z,\tau)\in \mathbb{R}^{2n}\times [T_0,\infty)$ and dynamics given by
            \begin{equation}\label{eq:standard_Averaging_Form_Nesterov}
                \frac{dz}{d\added{s}} = \varepsilon \left(\left(\Phi_\tau^{h}\right)^* u_{\varepsilon\added{s}}\right)(z) \eqqcolon \varepsilon v(z,\varepsilon\added{s},\added{s}),\quad \added{\frac{d\tau}{ds}=\varepsilon}.
            \end{equation}
            By Theorem \ref{variationOfConstants}, a solution $y$ to system \eqref{eq:pre_standard_Averaging_Form_Nesterov} and a solution $z$ to system \eqref{eq:standard_Averaging_Form_Nesterov} with $y(0)=z(0)$, satisfy $y(\added{s}) = \Phi_{\added{s}}^{h}\left(z(\added{s})\right)$ for all $\added{s}\in \dom y$.
            \begin{remark}
                The \added{system} in \eqref{eq:standard_Averaging_Form_Nesterov} admits a series representation that expands the pullback in integrals of iterated Lie brackets between $u_{\varepsilon\added{s}}$ and $h$.\reportOnly{\cite{agracevExponentialRepresentationFlows1979}}\hspace{-4pt}While closed forms are generally not available, simplifications emerge when certain Lie brackets vanish; an approach used for Euler-Lagrange systems in \cite{bulloAveragingVibrationalControl2002}. Although Nesterov's ODE has an Euler-Lagrange formulation that might benefit from similar simplifications, \added{our instability analysis studies the linear case of Example \ref{example:linearDecomposition}, leaving the nonlinear case for future work.}$\hfill\square$
            \end{remark}
            \section{Instability under Linear Mappings}\label{sec:instability}
            In this section, we study the setting where $\cal G(x)=Qx$ for some $Q\in \mathbb{R}^{n\times n}$ \added{and all $x\in \R^n$}. In this case, the Helmholtz decomposition yields $J(x) = \frac{1}{2}x^\top Q_sx$ and $\rot K(x) = Q_ax$, where $Q_s$ and $Q_a$ are the symmetric and skew-symmetric parts of $Q$, respectively. \added{Therefore}, the ODE in \eqref{eq:pre_standard_Averaging_Form_Nesterov} becomes
            \begin{align}\label{eq:preStandardLinear}
                \frac{dy}{d\added{s}} = \underbrace{\begin{pmatrix}
                                                            0          & I \\
                                                            -\hat{Q}_s & 0
                                                        \end{pmatrix}}_{\displaystyle\eqqcolon A}y + \varepsilon\underbrace{\begin{pmatrix}
                                                                                                                                0          & 0                                              \\
                                                                                                                                -\hat{Q}_a & -\frac{3}{\varepsilon\added{s} + \added{T_0}}I
                                                                                                                            \end{pmatrix}}_{\displaystyle\eqqcolon B(\varepsilon s)}y,
            \end{align}
            with $d\tau/ds=\varepsilon$, and where $\hat{Q}_s=\frac{Q_s}{\|Q_s\|}$, $\hat{Q}_a=\alpha\frac{Q_a}{\|Q_a\|}$, with $\alpha\in(0,1]$. Similarly, the dynamical system in \eqref{eq:standard_Averaging_Form_Nesterov} reduces to
\begin{align}\label{eq:standardLinear}
    \frac{dz}{d\added{s}} = \varepsilon \Lambda(\tau, s)z\coloneqq \varepsilon \left(e^{-As }B(\tau)e^{As}\right)z,\quad \frac{d\tau}{d\added{s}} = \varepsilon.
\end{align}
Before presenting the first main result of this paper, we introduce a lemma that, under suitable conditions on $Q_s$, shows that the flow of the vector field $y\mapsto Ay$ is periodic.
\begin{lemma}\label{lemma:periodic}
    Let $\cal G(x) = Qx$ where $Q\in \mathbb{R}^{n\times n}$ and suppose $\cal G$ satisfies Assumption \ref{assump:regularity_Helmholtz}. {\color{black}Suppose there exists $\mu\in \mathbb{R},\mu\neq 0,$ such that for each $\lambda \in \mathrm{spec}(Q_s)$ there exists $c_\lambda\in \Q_{>0}$\footnote{$\Q_{>0}$ denotes the set of positive rational numbers} satisfying $\lambda = (c_\lambda\mu)^2$}. Then, \added{there exists $\cal T>0$ such that} every solution $\psi$ to
    \begin{align}\label{eq:driftDynamics}
        \frac{d \psi}{d\added{s}} = A\psi,
    \end{align}
    where $A$ is defined in \eqref{eq:preStandardLinear}, is periodic \added{with period $\cal T$}.\hfill$\square$
\end{lemma}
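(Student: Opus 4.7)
The plan is to diagonalize the block matrix $A$ and exploit the commensurability of its purely imaginary eigenvalues. First, I would observe that Assumption~\ref{assump:regularity_Helmholtz}-(i) implies $Q_s$ is symmetric and positive definite, so $\hat{Q}_s = Q_s/\|Q_s\|$ admits a spectral decomposition $\hat{Q}_s = V\Lambda V^\top$ with $V$ orthogonal and $\Lambda = \operatorname{diag}\{\hat\lambda_k\}_{k=1}^n$, $\hat\lambda_k>0$. Then I would compute the spectrum of $A$: the ansatz $(v,\sigma v)$ with $\hat{Q}_s v = \hat\lambda v$ yields $\sigma^2 = -\hat\lambda$, so $\operatorname{spec}(A) = \{\pm i\sqrt{\hat\lambda_k}\}_{k=1}^n$. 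Since the eigenvalues are distinct within each sign when multiplicities are simple, and more generally since $A$ is similar (via a block-diagonal change of variables built from $V$) to a direct sum of $2\times 2$ rotation blocks $\begin{pmatrix}0 & \sqrt{\hat\lambda_k}\\ -\sqrt{\hat\lambda_k} & 0\end{pmatrix}$, the flow $e^{As}$ decouples into independent planar harmonic oscillators with angular frequencies $\omega_k \coloneqq \sqrt{\hat\lambda_k}$.

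Next, I would translate the hypothesis on $Q_s$ into a statement about the ratios $\omega_k/\omega_\ell$. By assumption, for each $\lambda_k \in \operatorname{spec}(Q_s)$ there exists $c_k \in \mathbb{Q}_{>0}$ with $\lambda_k = (c_k\mu)^2$. Since $\hat\lambda_k = \lambda_k/\|Q_s\|$, one obtains
\begin{equation*}
\omega_k = \sqrt{\hat\lambda_k} = \frac{c_k|\mu|}{\sqrt{\|Q_s\|}}, \qquad k=1,\dots,n,
\end{equation*}
so that $\omega_k/\omega_\ell = c_k/c_\ell \in \mathbb{Q}_{>0}$ for every pair $(k,\ell)$. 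Writing each $c_k = p_k/q_k$ in lowest terms, I would define $N\coloneqq \operatorname{lcm}(q_1,\dots,q_n)$ and set
\begin{equation*}
\mathcal{T} \coloneqq \frac{2\pi N \sqrt{\|Q_s\|}}{|\mu|}.
\end{equation*}
Then $\omega_k\mathcal{T} = 2\pi N c_k = 2\pi N p_k/q_k \in 2\pi\mathbb{Z}$ for all $k$, so each planar block satisfies $e^{A_k\mathcal{T}} = I_2$, hence $e^{A\mathcal{T}} = I_{2n}$. This gives $\psi(s+\mathcal{T}) = e^{A(s+\mathcal{T})}\psi(0) = e^{As}\psi(0) = \psi(s)$ for every solution.

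The only delicate step is the passage from the (complex) diagonalization of $A$ to a clean statement about the real flow; I would handle this by pairing each complex conjugate eigenpair $\pm i\omega_k$ into a real invariant $2$-plane on which $A$ acts as a rotation generator. Once that structure is in place, the argument reduces to elementary commensurability of $n$ rotation frequencies, which is precisely what the hypothesis ``$\lambda = (c_\lambda\mu)^2$ with $c_\lambda\in\mathbb{Q}_{>0}$'' was crafted to guarantee. Repeated eigenvalues of $Q_s$ pose no difficulty because $\hat{Q}_s$ remains diagonalizable by orthogonal similarity, preserving the block-decoupling of $A$.
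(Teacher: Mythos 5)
Your proposal is correct and follows essentially the same route as the paper's proof: orthogonally diagonalize $\hat{Q}_s$, recognize that $A$ decouples into planar rotation blocks with frequencies $\omega_k=\sqrt{\hat\lambda_k}$, and use the rational-ratio hypothesis to exhibit a common period via a least common multiple of denominators. You are in fact a bit more careful than the paper in explicitly tracking the normalization factor $\sqrt{\|Q_s\|}$ between $\operatorname{spec}(Q_s)$ and $\operatorname{spec}(\hat{Q}_s)$ and in using $|\mu|$, but the substance of the argument is identical.
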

\added{
    \begin{proof}
        By the definition of $A$ and the determinant formula for block matrices,
        $
            \text{det}\left(\lambda I - A\right)  = \text{det}(\lambda^2 I + \hat{Q}_s).
        $
        Thus, the eigenvalues of $A$ are $\pm i\omega_j$, where $\omega_j^2 = \lambda_j$ for $\lambda_j \in \mathrm{spec}(\hat{Q}_s)$.
        Since $J(x) = \frac{1}{2}x^\top Q_s x$ is strongly convex, $\hat{Q}_s$ is positive definite, so all $\omega_j \in\R_{>0}$. Since $\hat{Q}_s$ is positive definite, there exists an orthogonal matrix $U$ that diagonalizes $\hat{Q}_s$. In the coordinates defined by $U$, the system \eqref{eq:driftDynamics} decouples into planar rotation blocks of the form $\dot \psi_j = \begin{pmatrix}0 & -\omega_j\\ \omega_j & 0\end{pmatrix} \psi_j$. Thus, each solution $\psi$ to \eqref{eq:driftDynamics} is a superposition of periodic signals with frequencies $\{\omega_j\}_{j=1}^n$.
        By the assumption on $Q_s$, we have that $\lambda_j = (c_j\mu)^2$ for $c_j\in\Q, c_j > 0$. Then, writing $c_j = a_j/b_j$ with $a_j, b_j \in \N_{>0}$, for each $j\in\{1,2,\ldots, n\}$ there exists $k_j \in \mathbb{Z}$ such that $\omega_j = k_j\omega_0$ where $\omega_0 \coloneqq \mu/L$, with $L$ being the least common multiple of $\{b_1,b_2,\ldots,b_n\}$. This result ensures periodicity of each solution $\psi$, with period $\cal T = 2\pi/\omega_0$.
    \end{proof}}
The result of Lemma \ref{lemma:periodic} enables the use of techniques for averaging of periodic systems with slow-time dependence (see \cite[Sec. 3.3]{sandersAveragingMethodsNonlinear2007}), and leads to our first main result.
\begin{theorem}\label{maintheorem}
    Let $\cal G(x)\,{=}\,Qx$, $Q\,{\in}\, \mathbb{R}^{n\times n}$, and assume $\cal G$ satisfies Assumption \ref{assump:regularity_Helmholtz}. Suppose that: i) all off-diagonal entries of $Q_a$ are nonzero, ii) $Q_s$ satisfies the  assumptions in Lemma \ref{lemma:periodic}, and iii) precisely one eigenvalue of $Q_s$ has algebraic multiplicity greater than one. Then, for each $\eta\in(0,1]$, the origin is unstable for the $x$-dynamics in Nesterov's ODE \eqref{eq:initial_Nesterov_ODE0}. \hfill$\square$
\end{theorem}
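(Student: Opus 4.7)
The plan is to prove instability by (a) applying periodic averaging to the linear system \eqref{eq:standardLinear}, (b) performing spectral analysis on the averaged generator restricted to the resonant invariant subspace that arises from the repeated eigenvalue of $Q_s$, and (c) transferring the resulting exponential growth back to the original $x$-dynamics. By Lemma~\ref{lemma:periodic}, $e^{As}$ is $\mathcal{T}$-periodic in the fast time $s$, while $\tau=\varepsilon s+T_0$ varies slowly, so \eqref{eq:standardLinear} is precisely a $\mathcal{T}$-periodic linear system with slow parametric dependence, to which the averaging theorem for periodic vector fields with slow parameters (e.g., \cite[Sec.~3.3]{sandersAveragingMethodsNonlinear2007}) applies. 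The averaged generator is $\bar\Lambda(\tau)\coloneqq \tfrac{1}{\mathcal{T}}\int_0^{\mathcal{T}}e^{-As}B(\tau)e^{As}\,ds$, and splitting $B(\tau)=B_1-\tfrac{3}{\tau}B_2$ with $B_1=\bigl(\begin{smallmatrix}0 & 0\\ -\hat Q_a & 0\end{smallmatrix}\bigr)$ and $B_2=\bigl(\begin{smallmatrix}0 & 0\\ 0 & I\end{smallmatrix}\bigr)$ gives $\bar\Lambda(\tau)=M_a-\tfrac{3}{\tau}N$ with constant matrices $M_a,N$.

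The next step is to diagonalize the fast drift to expose the resonant block. As in the proof of Lemma~\ref{lemma:periodic}, let $U$ be orthogonal with $U^{\top}\hat Q_s U=\operatorname{diag}(\omega_1^2,\ldots,\omega_n^2)$ and rescale so that $A$ becomes block-diagonal with $2\times 2$ rotation blocks $A_j=\bigl(\begin{smallmatrix}0 & -\omega_j\\ \omega_j & 0\end{smallmatrix}\bigr)$. Then $e^{As}$ decomposes into planar rotations at the frequencies $\omega_j$, and each entry of $e^{-As}B_1 e^{As}$ is a trigonometric polynomial in $s$ with frequencies $\omega_i\pm\omega_j$; only the contributions from index pairs with $\omega_i=\omega_j$ survive the time average. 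By condition~(iii), these resonant contributions are confined to a single invariant block of dimension $2m$, corresponding to the unique eigenvalue of $Q_s$ with multiplicity $m\ge 2$.

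On this resonant block, a direct computation yields $M_a|_{\mathrm{res}}=\bigl(\begin{smallmatrix}0 & \tfrac{1}{2}\tilde Q_a\\ -\tfrac{1}{2}\tilde Q_a & 0\end{smallmatrix}\bigr)$ and $N|_{\mathrm{res}}=\tfrac{1}{2}I$, where $\tilde Q_a$ is the skew-symmetric $m\times m$ sub-block of $U^{\top}\hat Q_a U$ associated with the repeated eigenspace. Condition~(i) forces $\tilde Q_a\neq 0$, so its purely imaginary spectrum contains a nonzero pair $\pm i\beta$, and the block identity $\det(\lambda I - M_a|_{\mathrm{res}})=\det(\lambda^2 I+\tfrac{1}{4}\tilde Q_a^2)$ yields a real eigenvalue $\tfrac{\beta}{2}>0$ for $M_a|_{\mathrm{res}}$. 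Hence the restricted averaged generator $M_a|_{\mathrm{res}}-\tfrac{3}{2\tau}I$ has largest eigenvalue $\tfrac{\beta}{2}-\tfrac{3}{2\tau}$, which is strictly positive for all $\tau$ beyond some threshold $\tau_0$.

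To close the argument, integrating the restricted averaged system $\tfrac{d\bar z}{d\tau}=\bigl(M_a|_{\mathrm{res}}-\tfrac{3}{2\tau}I\bigr)\bar z$ past $\tau_0$ produces exponential growth in $\tau$ that the $O(1/\tau)$ damping can only attenuate algebraically, and an averaging-type instability result for slowly-parameter-varying periodic linear systems then transfers this unbounded growth to $z$ in \eqref{eq:standardLinear}. Since $y(s)=e^{As}z(s)$ and $e^{As}$ is uniformly bounded above and below in norm, the $y$-solutions grow unboundedly as well; as $x=y_1$ and the unstable eigenvector has a nontrivial projection onto the $y_1$-coordinates, this yields unbounded growth of $x$ and hence instability of the origin for the $x$-dynamics in \eqref{eq:initial_Nesterov_ODE0}. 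The main obstacle I expect is rigorously extracting the averaging estimate in the presence of the slowly decaying damping $3/\tau$: I would handle this either by splitting time into windows $[\tau_k,\tau_k+L/\varepsilon]$ on which $\bar\Lambda(\tau)$ is nearly constant and its positive eigenvalue gap is uniformly bounded below, chaining exponential growth estimates across windows, or via a near-identity change of variables that decouples the unstable block up to $O(\varepsilon)$ perturbations controllable on the $O(1/\varepsilon)$ horizon on which the growth becomes manifest.
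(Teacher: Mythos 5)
Your proposal follows essentially the same route as the paper's proof: periodic averaging of the pulled-back linear system, orthogonal diagonalization of $\hat Q_s$ to make $e^{As}$ a direct sum of planar rotations, identifying that only index pairs with equal eigenvalues survive the time average, observing that the surviving skew-symmetric sub-block $\tilde Q_a$ is nonzero (hence the restricted averaged generator is indefinite and has a positive real eigenvalue), and transferring instability back through the uniformly bounded-below map $e^{As}$ via Floquet-type reasoning. The only cosmetic differences are that you rescale to symmetric rotation blocks (making the resonant averaged block $\tfrac12\bigl(\begin{smallmatrix}0&\tilde Q_a\\-\tilde Q_a&0\end{smallmatrix}\bigr)$ rather than the paper's $\tfrac12\bigl(\begin{smallmatrix}0&\tilde Q_a/q\\-\tilde Q_a&0\end{smallmatrix}\bigr)$, with the same spectral conclusion) and that you make the ``$\overline B_1$ dominates the $3/\tau$ damping as $\tau\to\infty$'' argument explicit via the eigenvalue bound $\tfrac{\beta}{2}-\tfrac{3}{2\tau}$ and a time-windowing scheme, where the paper appeals directly to the Floquet result of Sarychev.
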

\begin{proof}
    \submissionOnly{\emph{Step 1 (Definition of the Average System):} By Lemma \ref{lemma:periodic}, there exists $\cal T>0$ such that $\Lambda(\tau, s)$ is periodic in $s$ with period $\cal T$. Indeed, since solutions to \eqref{eq:driftDynamics} are periodic, the matrix exponential $e^{As}$ is periodic in $s$. Because $B(\tau)$ is independent of $s$, the matrix $\Lambda(\tau,s) = e^{-As}B(\tau)e^{As}$ is also periodic with period $\cal T$. \added{By} averaging \eqref{eq:standardLinear} over $s$, we obtain the following slow time-varying\footnote{\added{The system is called slow time-varying because the resulting vector field depends on $\tau$, which evolves slowly since $\frac{d\tau}{ds} = \varepsilon \ll 1$.}} average system:
    {\begin{align}\label{proof:average}
            \frac{d\zeta}{d\added{s}} & =  \varepsilon\left( \overline{B}_1 + \frac{3}{\tau + \added{T_0}}\overline{B}_2\right)\zeta,\qquad \frac{d\tau}{d\added{s}} = \varepsilon,
        \end{align}
        where $\overline{B}_k \coloneqq { \frac{1}{\cal T}\int_{0}^{\cal T}e^{-As}B_ke^{As}}ds,$}
    for $k\in \{1,2\}$, and
    $ B_1 {\coloneqq} \begin{pmatrix}
            0          & 0 \\
            -\hat{Q}_a & 0
        \end{pmatrix}$ and $ B_2{\coloneqq}\begin{pmatrix}
            0 & 0  \\
            0 & -I
        \end{pmatrix}$.\smallbreak}
    \reportOnly{{\color{black}
                \emph{Step 1 (Definition of the Average System):} By Lemma \ref{lemma:periodic}, there exists $\cal T>0$ such that $\Lambda(\tau, s)$ is periodic in $s$ with period $\cal T$. Indeed, since solutions to \eqref{eq:driftDynamics} are periodic, the matrix exponential $e^{As}$ is periodic in $s$. Because $B(\tau)$ is independent of $s$, the matrix $\Lambda(\tau,s) = e^{-As}B(\tau)e^{As}$ is also periodic with period $\cal T$. By averaging system \eqref{eq:standardLinear} over $s$ we otain
                \begin{align*}
                    B(\tau) = \begin{pmatrix}
                                  0          & 0                      \\
                                  -\hat{Q}_a & -\frac{3}{\tau + T_0}I
                              \end{pmatrix}.
                \end{align*}

                We decompose $B(\tau)$ as $B(\tau) = B_1 + \frac{3}{\tau + T_0}B_2$, where
                \begin{align*}
                    B_1 \coloneqq \begin{pmatrix}
                                      0          & 0 \\
                                      -\hat{Q}_a & 0
                                  \end{pmatrix}, \quad
                    B_2 \coloneqq \begin{pmatrix}
                                      0 & 0  \\
                                      0 & -I
                                  \end{pmatrix}.
                \end{align*}
                Averaging over one period $\cal T$ yields the following slow time-varying\footnote{The system is called slow time-varying because the resulting vector field depends on $\tau$, which evolves slowly since $\frac{d\tau}{ds} = \varepsilon \ll 1$.} average system:
                \begin{align}\label{proof:average}
                    \frac{d\phi}{ds} = \varepsilon\left(\overline{B}_1 + \frac{3}{\tau + T_0}\overline{B}_2\right)\phi, \quad \frac{d\tau}{ds} = \varepsilon,
                \end{align}
                where $\overline{B}_k \coloneqq \frac{1}{\cal T}\int_{0}^{\cal T}e^{-As}B_ke^{As}\,ds$ for $k\in\{1,2\}$.\smallbreak}
    }
    \emph{Step 2 (Instability of the Average System):} We analyze the \added{term} $\overline{B}_1$ in \eqref{proof:average} which, as $s\to \infty$, \added{dominates} the stability \added{properties} of \eqref{proof:average}. First, since $\hat{Q}_s$ is  positive definite, \added{by the spectral theorem \cite[2.5.6]{hornMatrixAnalysis2017}}, there exists an orthonormal matrix $P$ such that
    $
        P^\top \hat{Q}_s P = \operatorname{diag}\{q_k \}_{k=1}^n\eqqcolon \Delta_s,~q_k >0,
    $
    where $\{q_k \}_{k=1}^n = \operatorname{spec}(\hat{Q}_s)$, and where $q_k >0$. Let $\hat{P} \coloneqq I_2 \otimes P$, where $\otimes$ denotes the Kronecker product, and define
    $
        \Delta \coloneqq \hat{P}^\top A\hat{P}
        = \begin{pmatrix}
            0         & I \\
            -\Delta_s & 0
        \end{pmatrix}.
    $
    Then, we have that
    \begin{align}\label{proof:matrixExponential}
         & \scalebox{0.95}{$\hat{P}^\top e^{\pm A\tau}\hat{P}=e^{\Delta\tau}
                = \begin{pmatrix}
                      \mathcal{C}(\added{s})      & \pm\mathcal{S}_2(\added{s}) \\
                      \mp\mathcal{S}_1(\added{s}) & \mathcal{C}(\added{s})
                  \end{pmatrix}$},                       
        %
    \end{align}
    \noindent with $\mathcal{C}(\added{s}) \!\coloneqq\! \operatorname{diag}\left\{\cos(\lambda_k\added{s})\right\}_{k=1}^n$,
    $\mathcal{S}_1(\added{s})\!\coloneqq\! \operatorname{diag}\left\{\lambda_k\sin(\lambda_k\added{s})\right\}_{k=1}^n$, $\mathcal{S}_2(\added{s})\!\coloneqq\! \operatorname{diag}\left\{\lambda_k^{\!-1}\sin(\lambda_k\added{s})\right\}_{k=1}^n$, and $\lambda_k\coloneqq \sqrt{q_k }$ for all $k=\{1,2,\ldots,n\}$.
    Using these definitions, we analyze the term
    $
        \hat{P}^\top \overline{B}_1 \hat{P} = \frac{1}{\cal T}\int_{0}^{\cal T}e^{-\Delta \added{s}}\hat{P}^\top B_1\hat{P}e^{\Delta \added{s}}d\added{s}.
    $
    From \eqref{proof:matrixExponential}, by letting $\tilde{Q}_a\coloneqq P^\top \hat{Q}_a P$, we have that
    \begin{align*}
        \scalebox{0.985}{$
                e^{-\Delta \added{s}}\hat{P}^\top B_1\hat{P}e^{\Delta \added{s}}=  \begin{pmatrix}
                                                                                       \mathcal{S}_2(\added{s})\tilde{Q}_a \mathcal{C}(\added{s}) & \mathcal{S}_2(\added{s})\tilde{Q}_a\mathcal{S}_2(\added{s}) \\
                                                                                       -\mathcal{C}(\added{s})\tilde{Q}_a\mathcal{C}(\added{s})   & -\mathcal{C}(\added{s})\tilde{Q}_a\mathcal{S}_2(\added{s})
                                                                                   \end{pmatrix}.$}
    \end{align*}
    Then, from the definitions of $C(\added{s})$ and $\mathcal{S}_2(\added{s})$:
    {\small
    \begin{align*}
        \left[\mathcal{S}_2(\added{s})\tilde{Q}_a \mathcal{C}(\added{s})\right]_{ij}   & = \lambda_{i}^{-1}\sin(\lambda_i\added{s})\tilde{Q}_{a,ij}\cos(\lambda_j \added{s}) \\
        \left[\mathcal{S}_2(\added{s})\tilde{Q}_a \mathcal{S}_2(\added{s})\right]_{ij} & = \lambda_{i}^{-2}\sin(\lambda_i\added{s})\tilde{Q}_{a,ij}\sin(\lambda_j \added{s}) \\
        \left[\mathcal{C}(\added{s})\tilde{Q}_a \mathcal{C}(\added{s})\right]_{ij}     & = \cos(\lambda_i\added{s})\tilde{Q}_{a,ij}\cos(\lambda_j \added{s})                 \\
        \left[\mathcal{C}(\added{s})\tilde{Q}_a \mathcal{S}_2(\added{s})\right]_{ij}   & = \lambda_j^{-1}\cos(\lambda_i\added{s})\tilde{Q}_{a,ij}\sin(\lambda_j \added{s}).
    \end{align*}}
    Since the period $\cal T$ can be written as the least common multiple of $\left\{\!\frac{1}{\lambda_1},\ldots, \frac{1}{\lambda_n}\!\right\}$, it follows that the terms $\mathcal{C}(\added{s})\tilde{Q}_a\mathcal{S}_2(\added{s})$ and $\mathcal{C}(\added{s})\tilde{Q}_{a}\mathcal{S}_2(\added{s})$ vanish under the averaging operation.
    Therefore, \scalebox{0.95}{$
            \hat{P}^\top \overline{B}_1 \hat{P} = \frac{1}{2}\begin{pmatrix}
                0                    & \overline{Q}_a^{(2)} \\
                \overline{Q}_a^{(1)} & 0
            \end{pmatrix},
        $}
    where $\left(\overline{Q}_a^{(1)}\right)_{ij}\coloneqq -(\overline{Q}_a)_{ij}$, $\left(\overline{Q}_a^{(2)}\right)_{ij}\coloneqq (\overline{Q}_a)_{ij}/q_i$, and
    \begin{align}\label{proof:vanishingConditions}
        \left(\overline{Q}_a\right)_{ij} & \coloneqq\begin{cases}
                                                        0\quad           & \text{if }i=j~\lor~q_i\neq q_j \\
                                                        \tilde{Q}_{a,ij} & \text{if }q_i =q_j
                                                    \end{cases},
    \end{align}
    %
    %
    where we used the fact that $\tilde{Q}_a^\top=-\tilde{Q}_a$ which implies that $\tilde{Q}_{a,ii}=0$ for all $i\in\{1,2,\ldots,n\}$.

    Given that $Q_s$ has exactly one degenerate eigenvalue, there exists $q>0$ such that $q_i=q$ for all indices $i$ in some subset $\mathcal{I}\subset \{1,2,\ldots,n\}$. Also, for all $(j,k)\in\mathcal{J}\coloneqq\{1,2,\ldots,n\}\setminus \mathcal{I}$, we have $q_j\neq q_k\neq q$. Thus, from \eqref{proof:vanishingConditions}, both $\overline{Q}_a^{(1)}$ and $\overline{Q}_a^{(2)}$ have zeros in their $j^{th}$ row and $j^{th}$ column for all $j\in \mathcal{J}$. Hence, given $\lambda \in \mathbb{C}$, we have that
    $
        \det(\lambda I -\hat{P}^\top \overline{B}_1 \hat{P}) = \lambda^{|\mathcal{J}|}\det\left(\lambda^2 I+\frac{1}{2q}\tilde{Q}^{2}_{\mathcal{J}}\right),
    $
    where $|\mathcal{J}|$ denotes the cardinality of $\mathcal{J}$, and where $\tilde{Q}_{\mathcal{J}}$ is obtained from $\tilde{Q}_a$ by removing its $j^{th}$ row and column for every $j\in \mathcal{J}$. Therefore,
    \begin{equation}\label{proof:eigenvaluesPrefinal}
        \scalebox{0.95}{$\left\{\lambda = \pm \sqrt{\frac{-\mu^2}{2q}}:~\mu\in\operatorname{spec}(\tilde{Q}_{\mathcal{J}})\right\}\subset \operatorname{spec}(\overline{B}_1)$},
    \end{equation}
    where we used the fact that the spectrum of a matrix is invariant under similarity transformations.

    Now, since $ \tilde{Q}_a$ is skew-symmetric, $\tilde{Q}_{\mathcal{J}}$  is also skew-symmetric by construction. Additionally, given that $ Q_{a,ij}\neq 0$ for all $i,j\in\{1,2,\dots,n\}$ by assumption, it follows that there exists a set  $\{\omega_k\}_{k=1}^{m}\subset\mathbb{R}_{>0}$, with $m=\left\lfloor|\mathcal{J}|/2\right\rfloor$, such that
    $
        \left\{\pm i\,\omega_k\right\}_{k=1}^m\subset\operatorname{spec}\left(\tilde{Q}_{\mathcal{J}}\right).
    $
    %
    \added{Thus, letting $\mu = i\omega_k$ in \eqref{proof:eigenvaluesPrefinal} yields $\lambda = \pm\omega_k/\sqrt{2q}$ and $\lambda\in \mathrm{spec}(\overline{B}_1)$, giving a positive real eigenvalue for $\overline{B}_1$}. Hence, the origin is unstable under the $\phi$ dynamics of \eqref{proof:average}.

    \emph{Step 3 (Instability of the Original System)}: Instability of system \eqref{eq:standardLinear} follows from Step 2 by applying the Floquet Theorem in \cite[Sec. 2]{sarychevLieChronologicoalgebraic2001}. Also, by Theorem \ref{variationOfConstants} we note that
    $|y(s)|=|\Phi^h_s(z)|=|e^{As} z(s)|\ge \sigma_{\min}(e^{As})|z(s)|$
    for all $s\in\mathbb{R}_{\ge0}$,
    where $\sigma_{\min}(\Lambda)$ denotes the minimum singular value of $\Lambda\in\R^{n\times n}$. Since $\sigma_{\min}(e^{As})>0$ by the same reasoning used in the proof of Lemma \ref{lemma:periodic}, the instability of system \eqref{eq:preStandardLinear} is implied by the instability of system \eqref{eq:standardLinear}.
    %
\end{proof}
\begin{remark}
    Theorem \ref{maintheorem} provides a theoretical explanation for the instability observed \added{numerically} in \cite{ochoaMomentumBasedNashSetSeeking2024} whenever non-conservative terms appear in the vector field $\cal G$ in \eqref{eq:initial_Nesterov_ODE0}. Such terms arise when \added{$\cal G$ is the pseudo-gradient of a non-potential game}, or in consensus-based dynamics when the communication graph is directed \cite{ochoaDecentralizedConcurrentLearning2024}. Note that the instability emerges \emph{regardless of the size} of $Q_a$. \added{In this sense}, Theorem \ref{maintheorem} also provides a procedure for the synthesis of arbitrarily small (compared to the conservative part of $\cal G$), adversarial, state-dependent perturbations able to destabilize \eqref{eq:initial_Nesterov_ODE0} under conservative mappings of the form $\cal G=\nabla J$, by adding to $\nabla J$ a perturbation of the form $\rot K$. \hfill$\square$
\end{remark}
\section{Precluding Instability via Restarting}\label{sec:stabilityRestarting}
To address the instability of Nesterov's ODE under non-conservative mappings, we regularize the dynamics using resets that restart the momentum. 
Similar approaches have been studied in \cite{teel2019first} under conservative maps, in \cite{ochoaMomentumBasedNashSetSeeking2024} for Nash equilibrium-seeking problems in monotone games, and in \cite{ochoaDecentralizedConcurrentLearning2024} for consensus-based concurrent learning in directed-graphs. Compared to these \added{works}, we establish improved convergence rates and tighter resetting conditions for a subclass of vector fields satisfying Assumption~\ref{assump:regularity_Helmholtz}-(i), (ii), \added{that includes nonlinear vector fields beyond the linear setting studied in Section \ref{sec:instability}.}

\added{Starting from Nesterov's ODE in \eqref{eq:initial_Nesterov_ODE0}, we let $q\coloneqq x$, $p\coloneqq\dot{x}$, and embed the dynamics in the HDS $\cal H=(C,f,D,g)$ with state $\chi = (q,p,\tau) \in \mathbb{R}^n \times \mathbb{R}^n \times \mathbb{R}_{\geq 0}$, and data
    \begin{align*}
        f(\chi) & \coloneqq \begin{pmatrix}
                                p                            \\
                                -\frac{3}{\tau}p - \cal G(q) \\
                                \eta
                            \end{pmatrix}, \quad
        g(\chi) \coloneqq \begin{pmatrix}q\\0\\T_0\end{pmatrix},                                       \\
        C       & \coloneqq \{(q,p,\tau) : q \in \mathbb{R}^n, p \in \mathbb{R}^n, \tau \in [T_0,T]\}, \\
        D       & \coloneqq \{(q,p,\tau) : q \in \mathbb{R}^n, p \in \mathbb{R}^n, \tau = T\},
    \end{align*}
    where $\eta \in (0,1]$ and $0 < T_0 < T$ are tunable parameters. The HDS periodically resets $\tau$ to $T_0$ when it reaches $T$, preventing the damping term $-3p/\tau $ from vanishing, and $p$ to $0$ to ensure the strict decrease of a suitable Lyapunov function across jumps.}
For this HDS, we analyze the stability of the compact set \added{$\cal A \coloneqq \{x^\star\} \times \{0\} \times [T_0,T]$}, where $x^\star \in \mathbb{R}^n$ is the unique point\footnote{Guaranteed to exist due to the strong monotonicity properties of $\cal G$.} that satisfies $\cal G(x^\star)=0$.
    {\color{black}
        \begin{theorem}\label{thm:hybrid_stability}
            Let $\cal G$ satisfy Assumption~\ref{assump:regularity_Helmholtz}-(i),(ii). Suppose that $\eta\in(0,1)$, and $\underline{T} < T \leq \overline{T}$, where $\underline{T}^2 \coloneqq T_0^2 + 4\eta^2/\kappa_J$ and $\overline{T} \coloneqq 2\min\{3(1-\eta), \kappa_J\eta\}/\ell_K$. Then, the set $\cal A$ is UGES for $\cal H$. Moreover, there exists $\rho > 0$ such that, for each compact set $K_0 \subset C\cup D$, there exist $M_J, M_{\cal G} > 0$ for which each solution $\chi = (q,p,\tau)$ with $\chi(0,0) \in K_0$ satisfies
            \begin{align*}
                \tilde{J}(q(t,j)) \leq \frac{M_J T^2 e^{-\rho j}}{\tau^2(t,j)}, \quad |\cal G(q(t,j))|^2 \leq \frac{M_{\cal G} T^2 e^{-\rho j}}{\tau^2(t,j)}
            \end{align*}
            $\forall (t,j) \in \dom \chi$, where $\tilde{J}(q) \coloneqq J(q) \!-\! J(x^\star)~\forall q \in \mathbb{R}^n$.\hfill$\square$
        \end{theorem}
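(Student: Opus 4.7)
The plan is to construct a hybrid Lyapunov function that decreases monotonically along flows and contracts strictly across jumps, then combine the two via the standard hybrid-systems argument and translate the resulting decay into the stated rates for $\tilde J$ and $|\cal G|^2$. Motivated by the Su-Boyd-Cand\`es functional and the need to cancel the non-conservative cross-term produced by $\rot K$, I would take
\begin{equation*}
V(q,p,\tau) := \tau^2 \tilde J(q) + \tfrac{1}{2}|2\eta(q-x^\star) + \tau p|^2 + 3\eta(1-\eta)|q-x^\star|^2,
\end{equation*}
which, by strong convexity and $\ell_J$-smoothness of $J$ (with $x^\star$ being the minimizer of $J$, as forced by $\cal G(x^\star)=0$ in the setting of interest), admits two-sided quadratic sandwich bounds in $|q-x^\star|^2 + |p|^2$ on the compact slice $\tau\in[T_0,T]$; this equivalence with $|\chi|_{\cal A}^2$ is what will deliver UGES.

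For the flow analysis, I would compute $\dot V$ along $\dot q=p$, $\dot p=-3p/\tau-\cal G(q)$, $\dot\tau=\eta$ and substitute the Helmholtz decomposition $\cal G=\nabla J+\rot K$. The weights $2\eta$ and $3\eta(1-\eta)$ are chosen precisely so that all raw $\langle q-x^\star,p\rangle$ cross-terms cancel; applying the sharp convexity inequality $\tilde J(q)\leq \langle \nabla J(q), q-x^\star\rangle - \tfrac{\kappa_J}{2}|q-x^\star|^2$, monotonicity of $\rot K$ from Assumption~\ref{assump:regularity_Helmholtz}(i), and strong monotonicity of $\cal G$ at $x^\star$ should yield, after cancellations,
\begin{equation*}
\dot V \leq -\eta\kappa_J\tau|q-x^\star|^2 - 3(1-\eta)\tau|p|^2 - \tau^2\langle\rot K(q),p\rangle.
\end{equation*}
Since $\rot K(x^\star)=-\nabla J(x^\star)=0$, Lipschitz continuity gives $|\rot K(q)|\leq \ell_K|q-x^\star|$; a weighted Young inequality then absorbs $-\tau^2\langle \rot K(q), p\rangle$ into the two preceding dissipative summands, and the double upper bound $T\leq \overline T = 2\min\{3(1-\eta),\kappa_J\eta\}/\ell_K$ is exactly what equalizes the two absorption thresholds, yielding $\dot V\leq -\rho_1 \tau |q-x^\star|^2$ for some $\rho_1>0$.

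For the jump analysis, at a jump $(q,p,T)\mapsto(q,0,T_0)$ only the first two summands of $V$ change, so
\begin{equation*}
V^+ - V = (T_0^2-T^2)\tilde J(q) + 2\eta^2|q-x^\star|^2 - \tfrac{1}{2}|2\eta(q-x^\star)+Tp|^2.
\end{equation*}
Dropping the non-positive squared term and using $\tilde J(q)\geq \tfrac{\kappa_J}{2}|q-x^\star|^2$ bounds the right-hand side above by $\bigl[-(T^2-T_0^2)\tfrac{\kappa_J}{2}+2\eta^2\bigr]|q-x^\star|^2$, which is strictly negative iff $T^2>T_0^2+4\eta^2/\kappa_J=\underline T^2$. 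Combined with the lower bound $V \geq T_0^2 \tfrac{\kappa_J}{2}|q-x^\star|^2$, this yields a state-independent contraction $V^+\leq e^{-\rho_2}V$ for some $\rho_2>0$. Invoking the hybrid Lyapunov machinery of \cite{goebelHybridDynamical2012} with $\dot V\leq 0$ on $C$ and the contraction on $D$ gives $V(t,j)\leq V(0,0)e^{-\rho_2 j}$; since $V\geq \tau^2\tilde J(q)$, the first stated rate follows directly, and the $|\cal G|^2$ rate follows from Lipschitz continuity $|\cal G(q)|\leq (\ell_J+\ell_K)|q-x^\star|$ combined with $|q-x^\star|^2\leq (2/\kappa_J)\tilde J(q)$. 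UGES of $\cal A$ then follows from the sandwich bounds on $V$ together with the fact that the flow time between consecutive jumps equals $(T-T_0)/\eta$, so $t$ and $j$ grow proportionally.

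The main obstacle I expect is extracting the precise $\min\{3(1-\eta),\kappa_J\eta\}$ structure of $\overline T$ from the flow analysis: a naive single-weight Young inequality produces only $T\leq 2\sqrt{3\eta(1-\eta)\kappa_J}/\ell_K$, which by AM-GM is strictly smaller than the stated bound. Recovering the $\min$ form requires splitting $\tau^2\langle\rot K(q),p\rangle$ into two pieces with independent Young weights so each is absorbed by a different dissipation term (one by the strong-monotonicity dissipation $\eta\kappa_J\tau|q-x^\star|^2$, the other by the momentum-damping dissipation $3(1-\eta)\tau|p|^2$). The algebraic choices $2\eta$ and $3\eta(1-\eta)$ in $V$ are likewise essential, since the more classical $(3-\eta)$ coefficient from Su-Boyd-Cand\`es would instead produce the weaker lower bound $\underline T^2=T_0^2+(3-\eta)^2/\kappa_J$ and fail to match the theorem's $4\eta^2$ scaling.
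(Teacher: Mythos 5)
Your Lyapunov function is the paper's up to a positive constant factor: expanding the paper's $V = a|q + \tfrac{\tau}{b}p - x^\star|^2 + c\tau^2|p|^2 + \delta\tau^2\tilde J(q)$ with $a = 2\eta(3-\eta)/T^2$, $b=3-\eta$, $\delta = 2/T^2$, and $m\coloneqq a/b^2+c=\delta/2$ gives $(2/T^2)\bigl[\tau^2\tilde J(q) + \tfrac12|2\eta(q-x^\star)+\tau p|^2 + 3\eta(1-\eta)|q-x^\star|^2\bigr]$, i.e.\ $(2/T^2)$ times your candidate. Your flow inequality $\dot V \leq -\eta\kappa_J\tau|q-x^\star|^2 - 3(1-\eta)\tau|p|^2 - \tau^2\langle\rot K(q),p\rangle$ and the final rate arguments coincide with the paper's after this rescaling, so the overall plan is the paper's. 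Two of your reasoning steps are off, however.

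First, the concern about $\overline T$ runs the inequality the wrong way. Bounding $-\tau^2\langle\rot K(q),p\rangle \leq T\tau\ell_K|p||q-x^\star| \leq \tfrac{T\tau\ell_K}{2}(|p|^2 + |q-x^\star|^2)$ with the \emph{symmetric} Young inequality (weight $1$) already produces $\dot V \leq -\tau\bigl(3(1-\eta) - \tfrac{T\ell_K}{2}\bigr)|p|^2 - \tau\bigl(\eta\kappa_J - \tfrac{T\ell_K}{2}\bigr)|q-x^\star|^2$, and requiring both coefficients to be positive gives precisely $T < 2\min\{3(1-\eta),\eta\kappa_J\}/\ell_K = \overline T$, exactly as in the paper. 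Optimizing the Young weight would yield the \emph{larger}, not smaller, threshold $2\sqrt{3\eta(1-\eta)\kappa_J}/\ell_K$, since $\sqrt{AB}\geq\min\{A,B\}$. No splitting of the cross-term into two Young inequalities is needed; the plainest estimate already gives the stated $\overline T$.

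Second, the closing step of the jump analysis does not go through as written. From $\Delta V \leq -c_1|q-x^\star|^2$ with $c_1 = (T^2-T_0^2)\kappa_J/2 - 2\eta^2 > 0$, the lower bound $V \geq T_0^2\tfrac{\kappa_J}{2}|q-x^\star|^2$ gives $|q-x^\star|^2 \leq \text{const}\cdot V$, which goes in the wrong direction for concluding $\Delta V \leq -\rho_2 V$ (and indeed fails when $p$ is large). What closes the argument is the observation that $V(g(\chi))$ depends on $q$ only (because $p^+=0$ and $\tau^+=T_0$), so there is a constant $C'>0$ with $V(g(\chi)) \leq C'|q-x^\star|^2$; then $\Delta V \leq -\tfrac{c_1}{C'}V(g(\chi))$ yields $V(g(\chi)) \leq \tfrac{C'}{C'+c_1}V(\chi)$. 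The paper avoids this subtlety by not discarding the $|p|^2$ dissipation: it applies Cauchy--Schwarz and Young with parameter $\theta = T/\Gamma$, $\Gamma \coloneqq \sqrt{(T^2-T_0^2)\kappa_J}$, to the cross-term $\langle q-x^\star,p\rangle$, getting $\Delta V \leq -\nu_1|p|^2 - \nu_2|q-x^\star|^2$ with $\nu_1,\nu_2 > 0$ precisely when $\Gamma>2\eta$ (equivalently $T>\underline T$), and then closes directly with the upper sandwich bound $V \leq \overline c\,|\chi|_{\cal A}^2$.
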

    }
    {\color{black}
        \submissionOnly{
            \begin{proof}
                For each $\chi\in C\cup D$, consider the Lyapunov function
                \begin{align*}
                    V(\chi) = a\left|q + \frac{\tau}{b}p - x^\star\right|^2 + c\tau^2|p|^2 + \delta\tau^2(J(q) - J(x^\star)),
                \end{align*}
                where $a = 2\eta b/T^2$, $b \coloneqq 3-\eta$, $c \coloneqq \frac{3a(1-\eta)}{2\eta b^2}$, and $\delta \coloneqq \frac{a}{\eta b} = \frac{2}{T^2}$. Since $\tau \in [T_0, T]$, and $J$ is strongly convex by Assumption \ref{assump:regularity_Helmholtz}, we obtain $\underline{c}|\chi|_{\cal A}^2 \leq V(\chi) \leq \overline{c}|\chi|_{\cal A}^2$ where $\underline{c} \coloneqq T_0^2\min\{c, \frac{\delta\kappa_J}{2}\}$, $\overline{c} \coloneqq \max\{a + \frac{aT}{b} + \frac{\delta T^2\ell_J}{2}, mT^2 + \frac{aT}{b}\}$, and $m \coloneqq \frac{a}{b^2} + c = \delta/2$.

                Computing $\dot{V}(\chi) \!=\! \langle \nabla V(\chi), f(\chi) \rangle$, and using the Helmholtz decomposition $\cal G(q) = \nabla J(q) + \rot K(q)$, yields
                \begin{align*}
                     & \dot{V}(\chi) = -2c\tau(3-\eta)|p|^2                                                                               \\
                     & ~~ - \frac{2a\tau}{b}\left(\inner{ q - x^\star}{\nabla J(q)} - (J(q) - J(x^\star))\right)                          \\
                     & \quad - \frac{2a\tau}{b}\inner{q - x^\star}{\rot K(q)} - \delta\tau^2\inner{p}{\rot K(q)}\quad \forall \chi \in C.
                \end{align*}
                By Assumption \ref{assump:regularity_Helmholtz}, $\langle \nabla J(q), q - x^\star \rangle - (J(q) - J(x^\star)) \geq \frac{\kappa_J}{2}|q - x^\star|^2$, $\langle q - x^\star, \rot K(q) \rangle \geq 0$, and $|\rot K(q)| \leq \ell_K|q - x^\star|$ for all $q\in\R^n$. Using these facts, together with the Cauchy-Schwarz and Young's inequalities, we obtain that
                \begin{align*}
                    \dot{V}(\chi) & \leq -\tau\lambda(|p|^2 + |q - x^\star|^2) + \frac{\delta T\tau\ell_K}{2}(|p|^2 + |q - x^\star|^2)                \\
                                  & = -\lambda\tau\left(1 - \frac{T}{\overline{T}}\right)|\chi|_{\cal A}^2 \leq -\mu V(\chi)\quad \forall \chi \in C,
                \end{align*}
                where $\lambda \coloneqq \min\{2c(3-\eta), \frac{a\kappa_J}{b}\}$ and $\mu \coloneqq \lambda(\overline{T}-T)T_0/\overline{T}\overline{c}$.

                On the other hand, for $\chi \in D$, $V(g(\chi)) - V(\chi) = -mT^2|p|^2 - \frac{2aT}{b}\langle q - x^\star, p \rangle - \delta(T^2 - T_0^2)(J(q) - J(x^\star))$. Defining $\Gamma \coloneqq \sqrt{(T^2 - T_0^2)\kappa_J}$, and applying Cauchy-Schwarz together with Young's inequality with parameter $\theta = T/\Gamma$, and using the strong convexity of $J$, by letting $\Delta V(\chi)\coloneqq V(g(\chi))-V(\chi)$, we obtain that
                \begin{align*}
                    \Delta V(\chi) & \!\leq\! -\left(\frac{aT^2}{2\eta b} - \frac{aT^2}{b\Gamma}\right)|p|^2 - \left(\frac{a\Gamma^2}{2\eta b} - \frac{a\Gamma}{b}\right)|q - x^\star|^2 \\
                                   & = -\nu_1|p|^2 - \nu_2|q - x^\star|^2 \leq -\frac{\nu}{\overline{c}}V(\chi)
                \end{align*}
                for all $\chi\in D$, where $\nu_1 \coloneqq 1 - \frac{2\eta}{\Gamma}$, $\nu_2 \coloneqq \frac{\Gamma(\Gamma - 2\eta)}{T^2}$, and $\nu \coloneqq \min\{\nu_1, \nu_2\}$. Since $T > \underline{T}$ implies $\Gamma > 2\eta$, $\nu\in(0,1)$.

                The quadratic bounds on $V$, combined with the strict decrease of $V$ during the flows and jumps of $\cal H$ establish that $\cal A$ is UGES for $\cal H$ via \cite[Thm. 1]{teelLyapunovBasedSufficient2013}. To obtain the convergence bounds for $\cal G$ and $\tilde{J}$, let $K_0 \subset C \cup D$ be compact. For any solution $\chi$ to $\cal H$ with $\chi(0,0) \in K_0$, define $c_j \coloneqq V(\chi(s_j, j))$ where $s_j \coloneqq \min\{t : (t,j) \in\dom\chi\}$. The flow and jump conditions give $c_{j+1} \leq e^{-\rho}c_j$ where $\rho \coloneqq -\ln(1 - \frac{\nu}{\overline{c}}) + \mu(T - T_0) > 0$. Thus, $c_j \leq e^{-\rho j}V(\chi(0,0))$. From $\delta\tau^2(J(q) - J(x^\star)) \leq V(\chi)$, we obtain $\tilde{J}(q(t,j)) \leq V(\chi(0,0))T^2 e^{-\rho j}/2\tau^2(t,j)$ for all $(t,j) \in\dom\chi$. Using strong convexity and Lipschitz continuity of $\cal G$ gives $|\cal G(q(t,j))|^2 \leq (\ell_J + \ell_K)^2 V(\chi(0,0)) T^2 e^{-\rho j}\kappa_J\tau^2(t,j)$. Letting $M_J \coloneqq \frac{1}{2}\max_{\chi \in K_0} V(\chi)$ and $M_{\cal G} \coloneqq \frac{2(\ell_J + \ell_K)^2}{\kappa_J}M_J$ completes the proof. Additional details are presented in the extended manuscript \cite{ochoa2025NesterovInstability}.
            \end{proof}}
        \reportOnly{
            Consider the Lyapunov function
            \begin{align}\label{eq:LyapunovDef}
                V(\chi) & = a\left|q + \frac{\tau}{b}p - x^\star\right|^2 + c\tau^2|p|^2 + \delta\tau^2(J(q) - J(x^\star)),
            \end{align}
            where $a=2\eta b/T^2$, $b\coloneqq 3-\eta$,  $c \coloneqq \frac{3a(1-\eta)}{2\eta b^2}$, and $\delta \coloneqq \frac{a}{\eta b}$, have been chosen to ensure suitable cancellations in the forthcoming analysis. To simplify computations we note that $V$ can be equivalently written as follows:
            \begin{align*}
                V(\chi) & = a|q - x^\star|^2 + \frac{2a\tau}{b}\langle q - x^\star, p \rangle                \\
                        & \qquad + \frac{a\tau^2}{b^2}|p|^2 + c\tau^2|p|^2 + \delta\tau^2(J(q) - J(x^\star)) \\
                        & = a|q - x^\star|^2 + \frac{2a\tau}{b}\langle q - x^\star, p \rangle                \\
                        & \qquad + m\tau^2|p|^2 + \delta\tau^2(J(q) - J(x^\star)),
            \end{align*}
            where $m\coloneqq  \frac{a}{ b^2} + c =\delta/2$.

            From \eqref{eq:LyapunovDef}, noting that $a\left|q + \frac{\tau}{b}p - x^\star\right|^2 \geq 0$,  $\tau \geq T_0$ for all $(q,p,\tau)\in C\cup D$, and that $J(q) - J(x^\star) \geq \frac{\kappa_J}{2}|q - x^\star|^2$ from Assumption \ref{assump:regularity_Helmholtz}, we obtain that $V(\chi)\ge \underline{c}|\chi|_{\cal A}^2$, where
            $\underline{c}\coloneqq T_0^2\min\left\{c, \frac{\delta\kappa_J}{2}\right\}$.

            On the other hand, using Cauchy-Schwarz and Young's inequalities gives $2\inner{q - x^\star}{p} \leq |q - x^\star|^2 + |p|^2$. Thus,
            \begin{align*}
                V(\chi) & \leq a|q - x^\star|^2 + \frac{aT}{b}(|q - x^\star|^2 + |p|^2) + mT^2|p|^2 + \\
                        & \quad  \frac{\delta T^2\ell_J}{2}|q - x^\star|^2                            \\
                        & = \left(a + \frac{aT}{b} + \frac{\delta T^2\ell_J}{2}\right)|q - x^\star|^2 \\
                        & \quad+ \left(mT^2 + \frac{a T}{b}\right)|p|^2,                              \\
                        & \le \overline{c}|\chi|^2_{\cal A},
            \end{align*}
            where $\overline{c} \coloneqq \max\left\{a + \frac{aT}{b} + \frac{\delta T^2\ell_J}{2}, mT^2 + \frac{aT}{b}\right\}$, and where we used the fact that $\tau \leq T$ for all $(q,p,\tau)\in C\cup D$, and that $J(q) - J(x^\star) \leq \frac{\ell_J}{2}|q - x^\star|^2$ for all $q\in \R^n$ by Assumption \ref{assump:regularity_Helmholtz}.

            Now, for each $\chi\in \R^n$, we compute the Lie derivative for each term in $V$ along $f$ as follows:
            \begin{align*}
                 & \inner{\nabla(a|q - x^\star|^2)}{f(\chi)}
                = 2a\inner{q - x^\star}{p},                                                                            \\[1ex]
                 & \big\langle\nabla \left(\tfrac{2a\tau}{b}\inner{q - x^\star}{p}\right), f(\chi)\big\rangle          \\
                 & \qquad= \tfrac{2a\eta}{b}\inner{q - x^\star}{p} + \tfrac{2a\tau}{b}|p|^2                            \\
                 & \qquad\quad + \tfrac{2a\tau}{b}\inner{q - x^\star}{-\tfrac{3}{\tau}p - \cal G(q)}                   \\
                 & \qquad= \tfrac{2a\eta}{b}\inner{q - x^\star}{p} + \tfrac{2a\tau}{b}|p|^2                            \\
                 & \qquad\quad - \tfrac{6a}{b}\inner{q - x^\star}{p} - \tfrac{2a\tau}{b}\inner{q - x^\star}{\cal G(q)} \\
                 & \qquad= \tfrac{2a}{b}\left(\eta-3\right) \inner{q - x^\star}{p} + \tfrac{2a\tau}{b}|p|^2            \\
                 & \qquad\quad - \tfrac{2a\tau}{b}\inner{q - x^\star}{\cal G(q)},                                      \\[1ex]
                 & \inner{\nabla(m\tau^2|p|^2)}{f(\chi)}                                                               \\
                 & \qquad= 2m\tau\eta|p|^2 + 2m\tau^2\inner{p}{-\tfrac{3}{\tau}p - \cal G(q)}                          \\
                 & \qquad= 2m\tau\eta|p|^2 - 6m\tau|p|^2 - 2m\tau^2\inner{p}{\cal G(q)}                                \\
                 & \qquad= 2m\tau\left(\eta-3\right)|p|^2 - 2m\tau^2\inner{p}{\cal G(q)},                              \\[1ex]
                 & \inner{\nabla(\delta\tau^2(J(q) - J(x^\star)))}{f(\chi)}                                            \\
                 & \qquad= 2\delta\tau\eta(J(q) - J(x^\star)) + \delta\tau^2\inner{\nabla J(q)}{p}.
            \end{align*}
            Therefore, letting $\dot{V}(\chi)\coloneqq  \inner{\nabla V(\chi)}{f(\chi)}$ for all $\chi\in \R^n$, we get that
            \begin{align*}
                \dot{V}(\chi) & = \left(2a + \frac{2a}{b}\left(\eta-3\right)\right)\inner{q-x^\star}{p} + \\
                            &\qquad +  2\tau\left(\frac{a}{ b} + m(\eta - 3)\right)|p|^2 \\
                              & \quad - \frac{2a\tau}{ b}\inner{q - x^\star}{\cal G(q)}- 2m\tau^2\langle p, \cal G(q) \rangle                                 \\
                              & \qquad  + 2\delta\tau\eta(J(q) - J(x^\star)) + \delta\tau^2\langle \nabla J(q), p \rangle.
            \end{align*}
            Since $b=3-\eta$, we obtain that $\frac{a}{b} + m(\eta-3)=\frac{a}{3-\eta} + \left(\frac{a}{(3-\eta)^2} + c\right)(\eta-3) = -c(3-\eta)$. Thus, from the above expression,
            \begin{align*}
                \dot{V}(\chi) & =  -2c\tau\left(3-\eta\right)|p|^2  - \frac{2a\tau}{ b}\inner{q - x^\star}{\cal G(q)} - 2m\tau^2\langle p, \cal G(q) \rangle \\
                              & \qquad+ 2\delta\tau\eta(J(q) - J(x^\star)) + \delta\tau^2\langle \nabla J(q), p \rangle.
            \end{align*}
            Using the fact that $\cal G(q) = \nabla J(q) + \rot  K(q)$ for all $q\in\R^n$ via Lemma \ref{lemma:helmholtzDecomposition}, we have that
            \begin{align}
                \dot{V}(\chi) & = -2c\tau(3-\eta)|p|^2 - \frac{2a\tau}{ b}\inner{q - x^\star}{\nabla J(q)}\notag             \\
                              & \quad- \frac{2a\tau}{ b}\inner{q - x^\star}{\rot  K(q)} \notag                               \\
                              & \qquad - 2m\tau^2\inner{p}{\nabla J(q)} - 2m\tau^2\inner{p}{\rot  K(q)} \notag               \\
                              & \qquad \quad+ 2\delta\tau\eta(J(q) - J(x^\star)) + \delta\tau^2\inner{\nabla J(q)}{p} \notag \\
                              & = -2c\tau(3-\eta)|p|^2 - \frac{2a\tau}{ b}\inner{q - x^\star}{\nabla J(q)}\notag             \\
                              & \quad - \frac{2a\tau}{ b}\inner{q - x^\star}{\rot  K(q)} \notag                              \\
                              & \qquad + (\delta - 2m)\tau^2\inner{p}{\nabla J(q)} - 2m\tau^2\inner{p}{\rot  K(q)} \notag    \\
                              & \qquad \quad+ 2\delta\tau\eta(J(q) - J(x^\star)).\label{eq:LyapunovFinal:aux0}
            \end{align}
            Given that $m=\frac{a}{b^2} + c=\frac{\delta}{2}$, and $\delta=\frac{a}{\eta b}$, from \eqref{eq:LyapunovFinal:aux0} it follows that
            \begin{align*}
                \dot{V}(\chi) & = -2c\tau(3-\eta)|p|^2                                                                                     \\
                              & \qquad  - \frac{2a\tau}{ b}\left[\inner{q - x^\star}{\nabla J(q)} - (J(q) - J(x^\star))\right]             \\
                              & \qquad\quad  - \frac{2a\tau}{ b}\inner{q - x^\star}{\rot  K(q)} - \delta\tau^2\inner{p}{\rot  K(q)}\notag.
            \end{align*}
            By Assumption \ref{assump:regularity_Helmholtz}, we have $\inner{\nabla J(q)}{q - x^\star} - (J(q) - J(x^\star)) \geq \frac{\kappa_J}{2}|q - x^\star|^2$ and $\inner{q - x^\star}{\rot  K(q)}\geq 0$. Thus, for each $\chi\in C$, we obtain
            \begin{align*}
                \dot{V}(\chi) & \leq -2c\tau(3-\eta)|p|^2 - \frac{a}{b}\tau\kappa_J|q - x^\star|^2                        \\
                              & \qquad - \delta\tau^2\inner{p}{\rot  K(q)}                                                \\
                              & \le -\tau\lambda\left(|p|^2 + |q - x^\star|^2\right) - \delta\tau^2\inner{p}{\rot  K(q)},
            \end{align*}
            where $\lambda \coloneqq \min\left\{2c(3-\eta), \frac{a\kappa_J}{b}\right\}$.
            Since $\rot K(x^\star) = 0$ and $|\rot K(q)| \leq \ell_K |q - x^\star|$ by Assumption \ref{assump:regularity_Helmholtz}(ii) and the fact that $\cal G(x^\star)=\nabla J(x^\star)=0$, using Cauchy-Schwarz and $\tau \leq T$, we have $-\delta\tau^2\inner{p}{\rot K(q)} \leq \delta T\tau \ell_K |p||q - x^\star|$. Applying Young's inequality
            \begin{align*}
                \dot{V}(\chi) & \leq -\tau\lambda\left(|p|^2 + |q - x^\star|^2\right) + \frac{\delta T\tau\ell_K}{2}\left(|p|^2 + |q - x^\star|^2\right) \\
                              & = -\tau\left(\lambda - \frac{\delta T\ell_K}{2}\right)\left(|p|^2 + |q - x^\star|^2\right)                               \\
                              & = -\lambda\tau \left(1-\frac{T}{\overline{T}}\right)|\chi|_{\cal A}^2                                                    \\
                              & \le -\mu V(\chi)
            \end{align*}
            for all $\chi\in C$, and where $\mu\coloneqq \lambda (\overline{T}-T)T_0/\overline{T}\overline{c}>0$.

            On the other hand, for all $\chi \in D$ we have that
            \begin{align*}
                 & V(g(\chi)) - V(\chi)  = V(q, 0, T_0) - V(q, p, T)                                               \\
                 & = \left(a|q - x^\star|^2 + \frac{2aT_0}{b}\langle q - x^\star, 0 \rangle + mT_0^2|0|^2\right.   \\
                 & \qquad \left. + \delta T_0^2(J(q) - J(x^\star))\right)                                          \\
                 & \quad - \left(a|q - x^\star|^2 + \frac{2aT}{b}\langle q - x^\star, p \rangle + mT^2|p|^2\right. \\
                 & \left. + \delta T^2(J(q) - J(x^\star))\right)                                                   \\
                 & = -mT^2|p|^2 - \frac{2aT}{b}\langle q - x^\star, p \rangle                                      \\
                 & \quad- \delta(T^2 - T_0^2)(J(q) - J(x^\star)).
            \end{align*}
            Define $\Gamma \coloneqq \sqrt{(T^2 - T_0^2)\kappa_J}$ and $\theta\coloneqq T/\Gamma$. Using Cauchy-Schwarz inequality, Young's inequality, and Assumption \ref{assump:regularity_Helmholtz}-(i), we obtain that
            \begin{align*}
                 & V(g(\chi)) - V(\chi)  \leq -mT^2|p|^2 + \frac{2aT}{b}|q - x^\star||p| \\
                 &\qquad\qquad\qquad\qquad- \delta(T^2 - T_0^2)(J(q) - J(x^\star)) \notag                        \\
                 & \leq -mT^2|p|^2 + \frac{aT}{b}\left(\frac{|q - x^\star|^2}{\theta} + \theta|p|^2\right) - \frac{\delta\Gamma^2}{2}|q - x^\star|^2            \\
                 & = -\left(\frac{aT^2}{2\eta b} - \frac{aT^2}{b\Gamma}\right)|p|^2 - \left(\frac{a\Gamma^2}{2\eta b} - \frac{a\Gamma}{b}\right)|q - x^\star|^2 \\
                 & = -\frac{aT^2}{2b}\left(\frac{1}{\eta} - \frac{2}{\Gamma}\right)|p|^2 - \frac{a\Gamma}{2\eta b}(\Gamma - 2\eta)|q - x^\star|^2               \\
                 & = -\eta\left(\frac{1}{\eta} - \frac{2}{\Gamma}\right)|p|^2 - \frac{\Gamma}{T^2}(\Gamma - 2\eta)|q - x^\star|^2                               \\
                 & = -\nu_1|p|^2 - \nu_2|q - x^\star|^2\\
                 & \le -\frac{\min\{\nu_1,\nu_2\}}{\overline{c}}V(\chi),
            \end{align*}
            where $\nu_1 \coloneqq 1 - \frac{2\eta}{\Gamma}$, $\nu_2\coloneqq \frac{\Gamma(\Gamma - 2\eta)}{T^2}$, and where we used the fact that $a=2\eta b/T^2$, $m = \frac{a}{2\eta b}$, and $\delta = \frac{a}{\eta b}$. Since $T > \underline{T}$ if and only if $\Gamma > 2\eta$, $\nu\in(0,1)$ by assumption.

            The quadratic bounds on $V$, combined with the strict decrease of $V$ during the flows and jumps of $\cal H$ establish that $\cal A$ is UGES for $\cal H$ via \cite[Thm. 1]{teelLyapunovBasedSufficient2013}. To obtain the convergence bounds for $\cal G$ and $\tilde{J}$, let $K_0 \subset C \cup D$ be compact. For any solution $\chi$ to the hybrid system $\cal H$ with $\chi(0,0) \in K_0$, define $c_j \coloneqq V(\chi(s_j, j))$  for all $j \in \cal J(\chi)\coloneqq \{i\in \N:\exists t\in \R_{\ge0}, (t,j)\in \dom \chi\}$, where $s_j \coloneqq \min\{t \in \mathbb{R}_{\geq 0} : (t,j) \in \dom\chi\}$. From the fact that $\dot{V}(\chi) \leq -\mu V(\chi)$ for all $\chi\in C$ and $V(g(\chi)) \leq (1 - \frac{\nu}{\overline{c}})V(\chi)$ for all $\chi\in D$, we obtain that $c_{j+1} \leq e^{-\mu(T - T_0)}(1 - \tfrac{\nu}{\overline{c}})c_j = e^{-\rho}c_j$, where $\rho \coloneqq \ln(1 - \frac{\nu}{\overline{c}}) + \mu(T - T_0) > 0$. Thus, $c_j \leq e^{-\rho j}V(\chi(0,0))$ for all $j\in \cal J(\chi)$.

            From the fact that $\delta\tau^2(J(q) - J(x^\star)) \leq V(\chi)$ for all $q\in\R^n$, and $\delta = \frac{2}{T^2}$, we obtain that
            \begin{align*}
                \tilde{J}(q(t,j)) \leq \frac{V(\chi(0,0))T^2 e^{-\rho j}}{2\tau^2(t,j)}
            \end{align*}
            for all $(t,j) \in \dom\chi$.

            Now, using the strong convexity of $J$ we have
            \begin{align*}
                |q(t,j) - x^\star|^2 &\leq \frac{2}{\kappa_J}(J(q(t,j)) - J(x^\star)) \\
                &= \frac{2}{\kappa_J}\tilde{J}(q(t,j))\\
                &\leq \frac{V(\chi(0,0))T^2 e^{-\rho j}}{\kappa_J\tau^2(t,j)}.
            \end{align*}
            Since $\cal G(x^\star) = 0$ and $\cal G$ is Lipschitz continuous with constant $\ell_J + \ell_K$ by Assumption \ref{assump:regularity_Helmholtz}, we obtain
            \begin{align*}
                |\cal G(q(t,j))| \leq (\ell_J + \ell_K)|q(t,j) - x^\star|.
            \end{align*}
            Combining these bounds yields
            \begin{align*}
                |\cal G(q(t,j))|^2&\leq (\ell_J + \ell_K)^2|q(t,j) - x^\star|^2 \\
                &\leq \frac{(\ell_J + \ell_K)^2 V(\chi(0,0)) T^2 e^{-\rho j}}{\kappa_J\tau^2(t,j)}
            \end{align*}
            for all $(t,j) \in\dom\chi$. Letting $M_J \coloneqq \frac{1}{2}\max_{\chi \in K_0} V(\chi)$, and $M_{\cal G} \coloneqq \frac{(\ell_J + \ell_K)^2}{\kappa_J}\max_{\chi \in K_0} V(\chi)$ completes the proof.
        }
    }
    \vspace{-5pt}
    {\color{black}
        \begin{corollary}\label{cor:optimalRestart}
            Under the assumptions of Theorem \ref{thm:hybrid_stability}, as $T_0\to0$, $T \in [2\underline{T},e\underline{T}]$ guarantees convergence of $q(t,j)$ to $x^\star$ at a rate of order $\mathcal{O}(e^{-\eta\sqrt{\kappa_J}t})$.\hfill$\square$
        \end{corollary}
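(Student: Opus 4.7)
The plan is to translate the per-jump exponential decay from Theorem~\ref{thm:hybrid_stability} into a continuous-time rate, then specialize the resulting expression to the regime $T_0 \to 0$ with $T = k\underline{T}$ for $k \in [2,e]$, and verify that the rate is of order $\eta\sqrt{\kappa_J}$.

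I would start from $\tilde{J}(q(t,j)) \leq M_J T^2 e^{-\rho j}/\tau^2(t,j)$ and apply the $\kappa_J$-strong convexity of $J$ to get $|q(t,j)-x^\star|^2 \leq (2M_J/\kappa_J)(T/\tau(t,j))^2 e^{-\rho j}$. Since $\dot\tau=\eta$ along flows and $\tau$ resets from $T$ to $T_0$ at each jump, every flow interval has continuous-time length $(T-T_0)/\eta$, so for each $(t,j)\in\dom\chi$ one has $j \geq \eta t/(T-T_0) - 1$. Substituting produces a continuous-time bound of the form $C e^{-\alpha t}$ with $\alpha = \rho\eta/(T-T_0)$, plus a bounded multiplicative prefactor absorbing the offset and the $(T/\tau)^2$ factor (which is maximized over a complete cycle by $(T/T_0)^2$ and can be handled by either restricting attention to the sampled times $t_j$ when $\tau(t_j,j)=T$ or by absorbing it into the initial-condition constant).

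Next I would evaluate $\alpha$ in the limit $T_0\to 0$. From the proof of Theorem~\ref{thm:hybrid_stability}, $\rho = -\ln(1 - \nu/\overline{c}) + \mu(T-T_0)$ with $\mu = \lambda(\overline{T}-T)T_0/(\overline{T}\,\overline{c})$, so $\mu\to 0$ and the jump-induced contraction becomes dominant. Using $\underline{T}\to 2\eta/\sqrt{\kappa_J}$ and $T=k\underline{T}$, the quantities $\Gamma,\nu_1,\nu_2,\overline{c}$ admit explicit limits depending only on $k,\eta,\kappa_J,\ell_J$; in particular $\Gamma \to 2k\eta$, $\nu_1 \to 1 - 1/k$, $\nu_2 \to (k-1)\kappa_J/k$, and $(T-T_0)/\eta \to 2k/\sqrt{\kappa_J}$. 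Substituting into $\alpha = \rho\eta/(T-T_0)$ and expanding $-\ln(1-\nu/\overline{c})$ yields an expression of the form $\alpha = \eta\sqrt{\kappa_J}\,g(k,\eta,\kappa_J,\ell_J)$. The lower endpoint $k\geq 2$ ensures $\Gamma > 2\eta$ (hence $\nu_1,\nu_2 > 0$) and keeps $\nu$ uniformly bounded away from zero; the upper endpoint $k\leq e$ captures the maximizer of the scalar map $k\mapsto \ln(k)/k$, which governs the trade-off between per-jump contraction strength (scaling like $\ln k$) and cycle length (scaling like $k$), and which thus keeps $g$ of order one on the interval $[2,e]$.

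The main obstacle is the bookkeeping in the third step: tracking which of $\nu_1,\nu_2$ realizes the minimum across the parameter ranges, performing a first-order expansion of $-\ln(1-\nu/\overline{c})$ in the correct small parameter, and confirming that the $\kappa_J$ and $\ell_J$ dependence in $\overline{c}$ cancels out so that only the desired $\sqrt{\kappa_J}$ factor remains in $\alpha$. Once the lower bound $\alpha \gtrsim \eta\sqrt{\kappa_J}$ is obtained for $k\in[2,e]$, exponentiation and the strong-convexity bound yield the claimed $\mathcal{O}(e^{-\eta\sqrt{\kappa_J}t})$ rate on $|q(t,j)-x^\star|$.
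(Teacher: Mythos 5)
Your proposal follows the same skeleton as the paper's proof: translate the per-jump contraction from Theorem~\ref{thm:hybrid_stability} into a continuous-time decay rate via $j \gtrsim \eta t/(T-T_0)$, take the limit $T_0\to 0$ (so $\mu\to 0$ and the jump contraction dominates), and then study the dependence of the exponent on $T$. Your explicit limits $\Gamma\to 2k\eta$, $\nu_1\to 1-1/k$, $\nu_2\to (k-1)\kappa_J/k$, $(T-T_0)/\eta\to 2k/\sqrt{\kappa_J}$ are all correct and agree with the paper.

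The genuine gap is in how you justify the interval $[2\underline{T}, e\underline{T}]$. You attribute the upper endpoint $e\underline{T}$ to the maximum of $k\mapsto \ln(k)/k$ at $k=e$, and the lower endpoint $2\underline{T}$ only to a positivity constraint ($\Gamma>2\eta$, i.e.\ $\nu_1,\nu_2>0$), but neither of these accounts for the actual structure of the paper's argument. The relevant object is the exponent $\alpha(T) = -\tfrac{\eta}{T}\ln\bigl(1-\beta(1-\underline{T}/T)\bigr)$ with $\beta = \min\{1,\kappa_J\}/\overline{c}$. Setting $\xi = \underline{T}/T$ reduces this (up to the prefactor $\eta/\underline{T}\to \sqrt{\kappa_J}/2$) to maximizing $\theta(\xi) = -\xi\ln(1-\beta(1-\xi))$ over $\xi\in(0,1)$. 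Your $\ln(k)/k$ function is precisely $\theta(1/k)$ \emph{only when $\beta=1$}; it is not the function being optimized for general $\beta$. The paper establishes via an auxiliary lemma that $\theta$ has a unique interior maximizer $\xi^{\text{opt}}$, then shows $\xi^{\text{opt}} = 1/e$ when $\beta=1$, $\xi^{\text{opt}}\to 1/2$ when $\beta\to 0$ (by a Taylor expansion of $-\ln(1-\beta(1-\xi))$), and uses continuity of $\xi^{\text{opt}}(\beta)$ to place $T^{\text{opt}} = \underline{T}/\xi^{\text{opt}}$ in $[2\underline{T}, e\underline{T}]$. So both endpoints of the interval arise from the same optimization, one from each extreme of $\beta$; your account recovers one endpoint correctly for the wrong reason and the other not at all.

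A second, smaller issue: you say the $\kappa_J$ and $\ell_J$ dependence in $\overline{c}$ ``cancels out so that only the desired $\sqrt{\kappa_J}$ factor remains in $\alpha$.'' This is not what happens. $\overline{c}$ does not drop out; it survives through $\beta = \min\{1,\kappa_J\}/\overline{c}$ and therefore through $\xi^{\text{opt}}$ and the multiplicative constant $c$ in the exponent. The clean $\sqrt{\kappa_J}$ scaling comes solely from the prefactor $\eta/\underline{T}\to\sqrt{\kappa_J}/2$ (note also that the $\eta$ cancels here, so the rate constant is really $\sqrt{\kappa_J}$ times an $\overline{c}$-dependent quantity); the $\ell_J$ dependence is absorbed into the implicit constant of the $\mathcal{O}(\cdot)$, not cancelled.
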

        \submissionOnly{
            \begin{proof}
                For any solution $\chi=(q,p,\tau)$ to $\mathcal{H}$, the state $\tau$ resets every $(T-T_0)/\eta$ units of time, giving $j \leq \frac{\eta t}{T - T_0}$ for any $(t,j)\in \dom\chi$. Additionally, from Theorem \ref{thm:hybrid_stability} and strong convexity, we obtain $|q(t,j) - x^\star|^2 \leq k_M\exp(-\alpha(T,T_0)t)$ where $\alpha(T,T_0) = -\frac{\eta}{T - T_0}\ln(1 - \frac{\nu}{\overline{c}})$.

                As $T_0 \to 0$, we have $\nu \to \min\{1, \kappa_J\}(1 - \frac{\underline{T}}{T})$ where $\underline{T} \to \frac{2\eta}{\sqrt{\kappa_J}}$. Setting $\xi = \frac{\underline{T}}{T}$ and $\beta = \frac{\min\{1,\kappa_J\}}{\overline{c}}$, we maximize $f(\xi) = -\xi\ln(1 - \beta(1-\xi))$. The unique critical point satisfies $\ln(1 - \beta(1-\xi^*)) + \frac{\beta\xi^*}{1 - \beta(1-\xi^*)} = 0$, which can be verified to be a maximum by checking $g(\xi) = \ln(1-\beta(1-\xi)) + \frac{\beta\xi}{1-\beta(1-\xi)}$ is strictly increasing with $g(0) < 0$ and $\lim_{\xi \to 1^-} g(\xi) > 0$. For $\beta = 1$, we obtain $\xi^* = e^{-1}$, giving $T^{\text{opt}} = e\underline{T}$. For small $\beta$, $\xi^* \approx 1/2$, giving $T^{\text{opt}} \approx 2\underline{T}$. In general, $T^{\text{opt}} \in [2\underline{T}, e\underline{T}]$.

                At $T=T^{\text{opt}}$, $\alpha(T) = \frac{\sqrt{\kappa_J}}{2} \frac{\beta(\xi^*)^2}{1 - \beta(1-\xi^*)}$, yielding convergence rate $\mathcal{O}(e^{-c\eta\sqrt{\kappa_J}t})$ for $c > 0$. Additional details are presented in the the extended manuscript \cite{ochoa2025NesterovInstability}.
            \end{proof}}
        \reportOnly{\begin{proof}
                For each solution $\chi=(q,p,\tau)$ to $\mathcal{H}$, we have that $j \leq \frac{\eta t}{T - T_0}$ for all $(t,j) \in \text{dom}\chi$, by the periodicity of $\tau$.

                Now, from the proof of Theorem \ref{thm:hybrid_stability}, for each compact set $K_0\subset C\cup D$, and each solution $\chi$ to $\cal H$ with $\chi(0,0)\in K_0$, there exists $M_J > 0$ such that $\tilde{J}(q(t,j)) \leq \frac{M_J T^2 e^{-\rho j}}{\tau^2(t,j)}$ where $\rho = -\ln(1 - \frac{\nu}{\overline{c}}) + \mu(T - T_0) > 0$. Using $\tau(t,j) \geq T_0$, the strong convexity of $J$ from Assumption \ref{assump:regularity_Helmholtz}, and the fact that $e^{-\rho j} = (1 - \frac{\nu}{\overline{c}})^j e^{-j\mu(T - T_0)} \leq (1 - \frac{\nu}{\overline{c}})^j$, we obtain
                \begin{align*}
                    |q(t,j) - x^\star|^2 \leq \frac{2M_J T^2}{\kappa_J T_0^2} \left(1 - \frac{\nu}{\overline{c}}\right)^j.
                \end{align*}
                Using $(1-x)^a = e^{a\ln(1-x)}$ and $j \leq \frac{\eta t}{T - T_0}$,
                \begin{align}\label{eq:distance_time}
                    |q(t,j) - x^\star|^2 \leq k_M\exp\left(-\hat{\alpha}(T,T_0)t\right),
                \end{align}
                where $k_M \coloneqq \frac{2M_J T^2}{\kappa_J T_0^2}$ and
                $\hat{\alpha}(T,T_0) \coloneqq -\frac{\eta}{T - T_0}\ln\left(1 - \frac{\nu}{\overline{c}}\right)$.

                Now, recall that $\nu = \min\{\nu_1, \nu_2\}$ where $\nu_1 = 1 - \tfrac{2\eta}{\Gamma}$, $\nu_2 = \tfrac{\Gamma(\Gamma - 2\eta)}{T^2}$, and $\Gamma = \sqrt{(T^2 - T_0^2)\kappa_J}$. As $T_0 \to 0$, we have that $\Gamma \to T\sqrt{\kappa_J}$, which yields
                \begin{align*}
                    \nu \to \min\{1, \kappa_J\}\left(1 - \frac{\underline{T}}{T}\right),
                \end{align*}
                where we used the fact that
                $\underline{T} = \sqrt{T_0^2 + \frac{4\eta^2}{\kappa_J}} \to \frac{2\eta}{\sqrt{\kappa_J}}$.
                Substituting into $\hat{\alpha}(T,T_0)$, with $T_0 \to 0$, gives
                \begin{align}\label{eq:alpha_limit}
                    \hat{\alpha}(T,T_0)\to\alpha(T)
                    \coloneqq -\frac{\eta}{T}\ln\left(1 - \beta\left(1-\frac{\underline{T}}{T}\right)\right),
                \end{align}
                where $\beta \coloneqq \frac{\min\{1,\kappa_J\}}{\overline{c}}$.

                To find the optimal restart time $T$ that maximizes $\alpha(T)$, we introduce the dimensionless variable $\xi \coloneqq \frac{\underline{T}}{T} \in (0,1)$, so $T = \frac{\underline{T}}{\xi}$. From \eqref{eq:alpha_limit}, we define
                $\tilde{\alpha}(\xi) \coloneqq \alpha(\underline{T}/\xi) = -\frac{\eta}{\underline{T}}\xi\ln(1 - \beta(1-\xi))$.
                Since $\frac{\eta}{\underline{T}} = \frac{\sqrt{\kappa_J}}{2}$ is a positive constant, maximizing $\alpha(\xi)$ is equivalent to maximizing
                \begin{align*}
                    \theta(\xi) \coloneqq -\xi\ln(1 - \beta(1-\xi)),~\xi\in(0,1).
                \end{align*}
                By Lemma \ref{lem:unique_optimum}, $\theta$ has a unique maximizer $\xi^{\text{opt}} \in (0,1)$ satisfying
                $
                    \ln(1 - \beta(1-\xi^{\text{opt}})) + \frac{\beta\xi^{\text{opt}}}{1 - \beta(1-\xi^{\text{opt}})} = 0.
                $
                Thus, letting $T=T^{\text{opt}}\coloneqq \underline{T}/\xi^{\text{opt}}$, gives
                \begin{align}\label{eq:alpha_final}
                    \alpha(T^{\text{opt}})
                    = \frac{\sqrt{\kappa_J}}{2} \cdot \frac{\beta(\xi^{\text{opt}})^2}{1 - \beta(1-\xi^{\text{opt}})}.
                \end{align}
                Defining $c \coloneqq \frac{\beta(\xi^{\text{opt}})^2}{2(1 - \beta(1-\xi^{\text{opt}}))} > 0$, from \eqref{eq:distance_time}, we obtain that
                \begin{align*}
                    |q(t,j) - x^\star| = \mathcal{O}(e^{-c\eta\sqrt{\kappa_J}t})
                \end{align*}
                which obtains the convergence bound, and where $c$ depends on $\overline{c}$ and $\kappa_J$ through $\beta$ and $\xi^{\text{opt}}$.

                Now, to obtain an expression for $T^{\text{opt}}$ we analyze by cases the behavior of $\xi^{\text{opt}}$.

                \textit{Case $\beta = 1$.} When $\overline{c} = \min\{1,\kappa_J\}$, equation    $
                    \ln(1 - \beta(1-\xi^{\text{opt}})) + \frac{\beta\xi^{\text{opt}}}{1 - \beta(1-\xi^{\text{opt}})} = 0
                $ reduces to
                \begin{align*}
                    \ln(\xi^{\text{opt}}) + \frac{\xi^{\text{opt}}}{1-(1-\xi^{\text{opt}})} = \ln(\xi^{\text{opt}}) + 1 = 0,
                \end{align*}
                which gives $\xi^{\text{opt}} = e^{-1} = 1/e$. Therefore,
                $T^{\text{opt}} = e\underline{T}$.

                \textit{Case $\beta \ll 1$.} When $\overline{c} \gg \min\{1,\kappa_J\}$, we have $\beta(1-\xi) \ll 1$. Using the Taylor expansion $\ln(1-x) \approx -x - \frac{x^2}{2}$ for small $x$, equation  equation $\ln(1 - \beta(1-\xi^{\text{opt}})) + \frac{\beta\xi^{\text{opt}}}{1 - \beta(1-\xi^{\text{opt}})} = 0$ yields
                \begin{align*}
                    -\beta(1-\xi^{\text{opt}}) + \frac{\beta\xi^{\text{opt}}}{1} \approx 0.
                \end{align*}
                This simplifies to $-\beta + \beta\xi^{\text{opt}} + \beta\xi^{\text{opt}} \approx 0$, giving $\xi^{\text{opt}} \approx 1/2$. Therefore, $T^{\text{opt}} \approx 2\underline{T}$.

                \textit{Case $\beta \in (0,1)$.} For intermediate values of $\beta$, the implicit function theorem guarantees that $\xi^{\text{opt}}(\beta)$ varies continuously from $1/e$ (when $\beta = 1$) to $1/2$ (as $\beta \to 0$). Therefore, $T^{\text{opt}} = \frac{\underline{T}}{\xi^{\text{opt}}} \in [2\underline{T}, e\underline{T}]$.
            \end{proof}}
    }
\begin{remark}\label{rem:semi_to_improved_rates}
    \added{Theorem~\ref{thm:hybrid_stability} yields the \emph{semi-acceleration} bound $\mathcal{O}(1/\tau(t,j)^2)$ along flow intervals, matching the classical $\mathcal{O}(1/t^2)$ rate of Nesterov-type ODEs with damping $3/t$ for smooth convex objectives~\cite{suDifferentialEquationModeling2016} during intervals of flow.}
    \added{In low-curvature regimes ($\kappa_J<1$),} Corollary~\ref{cor:optimalRestart} improves upon gradient-like dynamics $\dot{x}=-\cal G(x)$, which attain $\mathcal{O}(e^{-\kappa_J t})$ convergence rates under Assumption~\ref{assump:regularity_Helmholtz}-(i,ii), and refines~\cite[Lem.~5]{ochoaMomentumBasedNashSetSeeking2024} by removing the factor $\sigma \coloneqq (\ell_J+\ell_{\kappa})/\kappa_J$ from the exponent in $\mathcal{O}\!\big(e^{-(\sqrt{\kappa_J}/\sigma)\,t}\big)$.
    \added{When $\cal G=\nabla J$, existing acceleration results achieve exponential convergence with exponent proportional to $\sqrt{\kappa_J}$~\cite[Thm.~1]{shiUnderstandingAccelerationPhenomenon2022}; our result attains the same qualitative $\sqrt{\kappa_J}$ scaling without the gradient-structure assumption.}

\end{remark}

\begin{figure*}[t]\label{fig:instability}
    \centering
    \includegraphics[width=0.95\linewidth]{"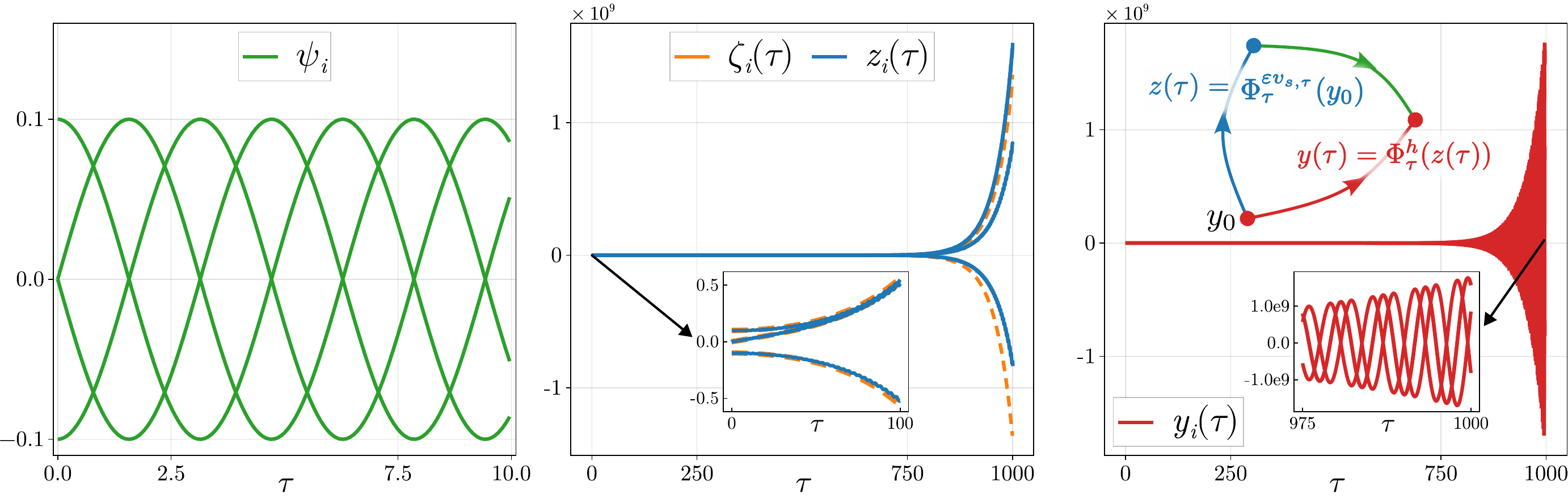"}
    \caption{Left: Components of $\psi$ showing a solution of the periodic system \eqref{eq:driftDynamics}. Middle: Components of $z$ and $\zeta$ showing the solutions of the pulled-back system \eqref{eq:standardLinear} and the averaged system \eqref{proof:average}. Right: Components of $y$ showing a solution of Nesterov's ODE in \eqref{eq:preStandardLinear}.}\vspace{-1.25em}
\end{figure*}
\section{Numerical Simulations}\label{sec:numeric}
\emph{Instability Example:} We consider a non-conservative driving term $\cal G(x)=(Q_s + Q_a)x = \ell_J\left(\hat{Q}_s + \frac{1}{\sqrt{\ell_J}}\hat{Q}_a\right)x$ with
$
    \hat{Q}_s\coloneqq\begin{pmatrix}
        1 & 0 \\
        0 & 1
    \end{pmatrix},~\hat{Q}_a\coloneqq \begin{pmatrix}
        0    & 0.5 \\
        -0.5 & 0
    \end{pmatrix},
$
and $\ell_J=100$, leading to $\varepsilon=0.1$. With these choices, all the conditions of Theorem \ref{maintheorem} are satisfied. We set $\tau(0)=T_0=0.1$, $y(0)=z(0)=(0.1, -0.1, 0, 0)$ for systems \eqref{eq:preStandardLinear}-\eqref{eq:standardLinear}. We also simulate system \eqref{eq:driftDynamics} from $\psi(0)=y(0)$ and the average system \eqref{proof:average} from  $\zeta(0)=z(0)$. The trajectories are shown in Figure 1. The left \added{plot} shows the periodic behavior of the linear system \eqref{eq:driftDynamics}, where the $\psi$-components exhibit bounded oscillations consistent with the periodic flow predicted by Lemma \ref{lemma:periodic}. The middle \added{plot} validates the averaging analysis by showing how the solution $z(\tau)$ of the pulled-back system \eqref{eq:standardLinear} closely follows the solution $\zeta(\tau)$ of the averaged system \eqref{proof:average} on the timescale $\mathcal{O}(1/\varepsilon)$. The right \added{plot} confirms the instability predicted by Theorem \ref{maintheorem}.

\emph{Precluding Instability via Restarting:} We simulate the restarting HDS $\cal H$ using the same vector field $\cal G(x) = (Q_s + Q_a)x$, which yields $\ell_K = 5$, $\ell_J = 100$, and $\kappa_J = 100$. We set $\eta=1/2$, $T_0 = 0.1$ and $T = T^{\text{opt}} = 0.471$, where $T$ satisfies the resetting frequency condition $\underline{T} < T<\overline{T}$ of Theorem \ref{thm:hybrid_stability}. Using $\chi(0,0) = (10^4, -10^4, 10^4, -10^4, 0.1)$, Figure 2 shows the evolution of $|q(t)-x^*|$ for both Nesterov's ODE and the HDS $\cal H$. The red stars indicate the restarting events. The simulations confirm both the UGES property of Theorem \ref{thm:hybrid_stability} and the  $\mathcal{O}(e^{-\sqrt{\kappa_J}t})$ convergence rate of Corollary \ref{cor:optimalRestart}.
\begin{figure}[t]
    \centering
    \includegraphics[width=0.875\linewidth]{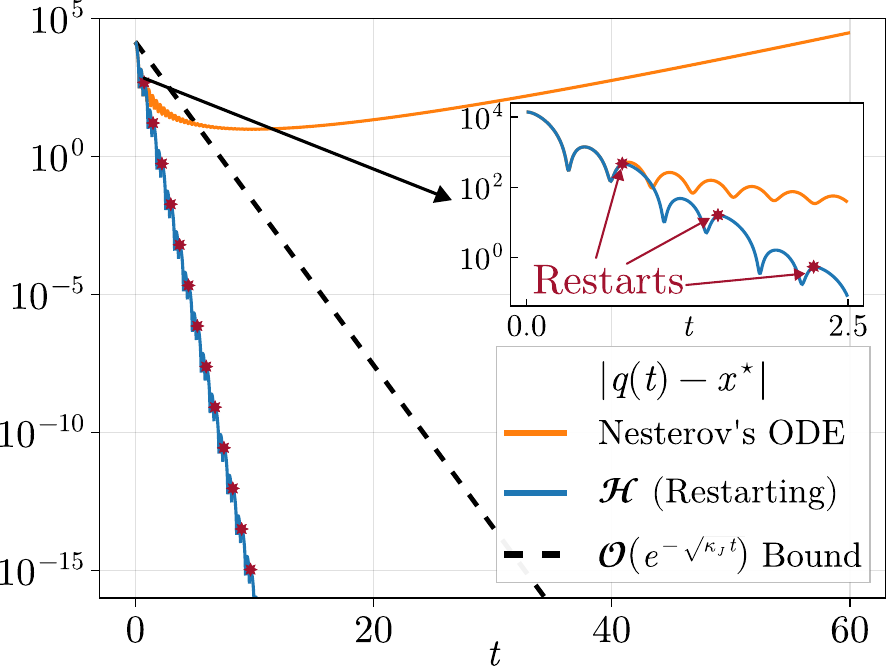}
    \caption{Comparison of trajectories of $|q(t) - x^*|$ under Nesterov's ODE and the restarting HDS $\cal H$.}\vspace{-1.3em}
    \label{fig:restarting}
\end{figure}
\section{Conclusions and Future Directions}\label{sec:conclusions}
In this paper, we prove that for linear vector fields with nonzero skew-symmetric components, Nesterov's ODE can exhibit instability even under strong monotonicity conditions. To \added{resolve} this instability, we design a hybrid dynamical system that \added{achieves robust} stability while \added{inducing} convergence rates of order $\mathcal{O}(e^{-\sqrt{\kappa_J}t})$ through periodic restarting mechanisms. Future work will focus on Lie bracket expansions of the pullback operator to analyze the instability of Nesterov's ODE in the case of nonlinear vector fields.

\renewcommand{\UrlFont}{\scriptsize\ttfamily} 
\renewcommand*{\bibfont}{\footnotesize}  
\DeclareFieldFormat{title}{\footnotesize{#1}}
\printbibliography

\clearpage
\reportOnly{
    \useRomanappendicesfalse
    \renewcommand{\thethmcounter}{\thesection.\arabic{thmcounter}}

    \appendices
    \section{Auxiliary Lemmas}
     {\color{black}
      \begin{lemma}\label{lem:unique_optimum}
          Let $\theta:(0,1) \to \mathbb{R}$ be defined by $\theta(\xi) \coloneqq -\xi\ln(1 - \beta(1-\xi))$ for all $\xi\in(0,1)$ where $\beta\in (0,1]$. Then, $\theta$ has a unique maximizer $\xi^{\text{opt}} \in (0,1)$ satisfying
          \begin{align}\label{eq:optimality_condition:lemma}
              \ln(1 - \beta(1-\xi^{\text{opt}})) + \frac{\beta\xi^{\text{opt}}}{1 - \beta(1-\xi^{\text{opt}})} = 0.
          \end{align}
      \end{lemma}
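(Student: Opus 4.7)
The plan is to proceed by elementary one-variable calculus: show that $\theta$ extends continuously to $[0,1]$ with vanishing endpoint values, reduce the first-order condition to \eqref{eq:optimality_condition:lemma}, and establish uniqueness of the critical point through strict monotonicity of an auxiliary function.

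First I would observe that, for $\xi\in(0,1)$ and $\beta\in(0,1]$, the argument $u(\xi)\coloneqq 1-\beta(1-\xi)=1-\beta+\beta\xi$ lies in $(1-\beta,1)$, so $\ln u(\xi)<0$ and hence $\theta(\xi)>0$ on $(0,1)$. Taking limits, $\theta(0^+)=0$ because of the prefactor $\xi$, and $\theta(1^-)=0$ because $u(\xi)\to 1$. Continuity on a compact interval together with vanishing boundary values forces the global maximum on $[0,1]$ to be attained at an interior point, which must be a critical point of $\theta$. Differentiating via the product and chain rules gives
\[ \theta'(\xi) = -\ln u(\xi) - \frac{\beta\xi}{u(\xi)}, \]
so the first-order condition $\theta'(\xi)=0$ rearranges directly to \eqref{eq:optimality_condition:lemma}, and existence of at least one such root in $(0,1)$ is guaranteed by the previous step.

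The main (though elementary) obstacle is uniqueness. My plan is to introduce the auxiliary function $g(\xi)\coloneqq -\theta'(\xi) = \ln u(\xi) + \beta\xi/u(\xi)$ and show it is strictly increasing on $(0,1)$. A short simplification yields
\[ g'(\xi) = \frac{\beta\bigl(2(1-\beta)+\beta\xi\bigr)}{u(\xi)^2}, \]
which is strictly positive for every $\xi\in(0,1)$ since $\beta\in(0,1]$ makes $2(1-\beta)\geq 0$ and $\beta\xi>0$. Strict monotonicity of $g$, combined with the endpoint values $g(0^+)=\ln(1-\beta)\leq 0$ (understood as $-\infty$ when $\beta=1$) and $g(1^-)=\beta>0$, yields exactly one root $\xi^{\text{opt}}\in(0,1)$ by the intermediate value theorem. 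Since this is the only critical point of $\theta$ in $(0,1)$, and $\theta$ vanishes at both endpoints while being strictly positive inside, $\xi^{\text{opt}}$ must be the unique maximizer, completing the argument.
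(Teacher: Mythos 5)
Your proof is correct and follows essentially the same strategy as the paper's: establish vanishing endpoint limits and strict positivity of $\theta$ in $(0,1)$ to force an interior maximizer, reduce the first-order condition to \eqref{eq:optimality_condition:lemma}, and prove uniqueness by showing the auxiliary function $g=-\theta'$ is strictly increasing and changes sign. Your intermediate simplification $g'(\xi) = \beta\bigl(2(1-\beta)+\beta\xi\bigr)/u(\xi)^2$ is actually the correct one (the paper's reported numerator $\beta(2-\beta+2\beta\xi)$ stems from a sign slip in differentiating the second term of $\vartheta$, though the positivity conclusion is unaffected), and you also handle the endpoint case $\beta=1$ more carefully, noting $g(0^+)=-\infty$ rather than a finite negative value.
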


      \begin{proof}
          Computing the derivative of $\theta$ with respect to $\xi$,
          \begin{align*}
              \frac{d\theta}{d\xi}(\xi)
               & = -\ln(1 - \beta(1-\xi)) - \frac{\beta\xi}{1-\beta(1-\xi)}\eqqcolon -\vartheta(\xi).
          \end{align*}
          Setting $\frac{d\theta}{d\xi}(\xi) = 0$ gives equation \eqref{eq:optimality_condition:lemma}.

          To verify uniqueness, we analyze the behavior of $\vartheta$ on $(0,1)$. Note that $\vartheta(\xi) = 0$ if and only if $\frac{d\theta}{d\xi}(\xi) = 0$. Computing the derivative of $\vartheta$ with respect to $\xi$ yields
          \begin{align*}
              \frac{d\vartheta}{d\xi}(\xi)
               & = \frac{\beta}{1-\beta(1-\xi)}                                          \\
               & \quad+ \frac{\beta(1-\beta(1-\xi)) + \beta^2\xi}{(1-\beta(1-\xi))^2}    \\
               & = \frac{\beta(1-\beta(1-\xi)) + \beta + \beta^2\xi}{(1-\beta(1-\xi))^2} \\
               & = \frac{\beta(2 - \beta + 2\beta\xi)}{(1-\beta(1-\xi))^2} > 0
          \end{align*}
          for all $\xi ,\beta\in (0,1)$. Thus $\vartheta$ is strictly increasing on $(0,1)$.

          Next, as $\xi \to 0^+$, we have that
          \begin{align*}
              \vartheta(0^+) = \ln(1-\beta) + 0 = \ln(1-\beta) < 0
          \end{align*}
          since $\beta \in (0,1]$. As $\xi \to 1^-$:
          \begin{align*}
              \lim_{\xi \to 1^-} \vartheta(\xi)
              = \ln(1) + \lim_{\xi \to 1^-}\frac{\beta\xi}{\beta\xi} = \beta > 0.
          \end{align*}
          By the intermediate value theorem and strict monotonicity of $\vartheta$, there exists a unique $\xi^{\text{opt}} \in (0,1)$ such that $\vartheta(\xi^{\text{opt}}) = 0$.

          Finally, we verify that $\xi^{\text{opt}}$ is a maximum. For any $\xi \in (0,1)$, we have $1 - \beta(1-\xi) \in (0,1)$ since $\beta \in (0,1]$, which implies $\ln(1 - \beta(1-\xi)) < 0$. Therefore,
          \begin{align*}
              \theta(\xi) = -\xi\ln(1 - \beta(1-\xi)) > 0
              \quad \text{for all } \xi \in (0,1).
          \end{align*}
          We have $\theta(0^+) = \lim_{\xi \to 0^+} -\xi\ln(1-\beta) = 0$ and $\theta(1^-) = \lim_{\xi \to 1^-} -\xi\ln(1) = 0$. Since $\theta$ is continuous on $(0,1)$, positive on $(0,1)$, and vanishes at the boundaries, it must attain its maximum at an interior critical point. Since $\xi^{\text{opt}}$ is the unique critical point in $(0,1)$, it is the global maximizer.
      \end{proof}}
}

\end{document}